\theoremstyle{plain}
\newtheorem{dummy}{anything}[section]
\newtheorem{theorem}[dummy]{Theorem}
\newtheorem{lemma}[dummy]{Lemma}
\newtheorem{question}[dummy]{Question}
\newtheorem{corollary}[dummy]{Corollary}
\theoremstyle{definition}
\newtheorem{definition}[dummy]{Definition}
\newtheorem{remark}[dummy]{Remark}
\newcommand{\del}{\partial}
\newcommand{\R}{\mathbb{R}}
\begin{document}

\title{canonical contact unit cotangent bundle}

\author{Takahiro Oba and Burak Ozbagci}

\address{Department of Mathematics, Tokyo Institute of Technology, 2-12-1 Ookayama, Meguroku, Tokyo
152-8551, Japan}
\email{oba.t.ac@m.titech.ac.jp}
\address{Department of Mathematics, Ko\c{c} University, Istanbul,
Turkey}
\email{bozbagci@ku.edu.tr}

\subjclass[2000]{}
\thanks{The first author was partially supported by JSPS KAKENHI Grant
Number 15J05214.}


\begin{abstract}

We describe an explicit open book decomposition adapted to the
canonical contact structure on the unit cotangent bundle of a closed
surface.

\end{abstract}

\maketitle

\section{Introduction}\label{sec: intro}

  Let $S$ denote a closed surface which is not necessarily orientable. Let  $\pi$ denote the projection
  of the bundle of cooriented lines tangent to $S$ onto $S$.
For a  point $q \in S$ and a cooriented line $u$ in $T_q S$, let
$\xi_{(q,u)}$ denote the cooriented
  plane described uniquely by the
  equation $\pi_*(\xi_{(q,u)}) = u \in T_q S$. The canonical contact structure $\xi_{can} $ on the bundle of cooriented lines
  tangent to $S$ consists of these planes (see, for example, \cite{ma}).

    If $S$ is equipped with a Riemannian
        metric, then the bundle of cooriented lines
  tangent to $S$ can be identified with the unit cotangent
        bundle $ST^*S$, and  $\xi_{can}$ is given by the kernel of
        the Liouville $1$-form $\lambda_{can}$ under this identification. Moreover, the disk
        cotangent bundle $DT^*S$ equipped with its canonical symplectic structure $\omega_{can}=d\lambda_{can}$ is a minimal strong symplectic
        filling of the contact $3$-manifold $(ST^*S, \xi_{can}) $.

        In this article, we describe an explicit abstract open book
        decomposition 
        adapted to the contact $3$-manifold $(ST^*S, \xi_{can})$, in the sense of Giroux
        \cite{g}. In the following, we use $\Sigma_g$ to
        denote the orientable closed surface
       of genus $g$ and $N_k$ to denote the non-orientable closed surface
       obtained by the connected sum of $k$ copies of the real
       projective plane $\mathbb{RP}^2$.

In Theorem~\ref{thm: ope} (resp. Theorem~\ref{theorem: g+2}), for
any $g \geq 1$, we describe an open book
        adapted to $(ST^*\Sigma_g, \xi_{can})$, whose page is a genus $g$ surface with $2g+2$ (resp. $g+2$)
        boundary components and we give an explicit factorization of its monodromy
        into a product of positive Dehn twists. In Corollary~\ref{cor: elf}, we also describe
        an exact symplectic Lefschetz fibration over $D^2$, whose total
        space is symplectomorphic to $(DT^*\Sigma_g, \omega_{can})$,
        up to completion.

        In Theorem~\ref{thm: ope2}, for
any $k \geq 1$, we describe an open book
        adapted to $(ST^*N_{k}, \xi_{can})$, whose page is a planar surface with $2k+2$
        boundary components and we give an explicit factorization of its monodromy
        into a product of positive Dehn twists.

        The  unit cotangent bundle $ST^*\Sigma_0$ is diffeomorphic to the real
        projective space $\mathbb{RP}^3$, and $\xi_{can}$ is the  unique tight contact
        structure in $\mathbb{RP}^3$, up to isotopy (cf. \cite{h}). It is well-known (see, for example \cite{eo}) that
        $(\mathbb{RP}^3, \xi_{can})$ has an adapted open book whose page is
        the annulus and whose monodromy is the square of the positive Dehn
        twist along the core circle of the annulus. Moreover, McDuff
        \cite{mc} showed that any minimal symplectic filling of
        $(\mathbb{RP}^3, \xi_{can})$ is diffeomorphic to $DT^*\Sigma_0$.

        The unit cotangent bundle $ST^*\Sigma_1$ is diffeomorphic to the
        $3$-torus $T^3$ and Eliashberg \cite{el} showed that $\xi_{can}$
        is the  unique strongly symplectically  fillable contact structure in $T^3$, up to contactomorphism. In his thesis \cite{vhm}, Van
        Horn-Morris constructed an explicit open book with genus one pages
        adapted to $(T^3, \xi_{can})$. Note that $(T^3, \xi_{can})$ can not
        be supported by a planar open book by a theorem of Etnyre \cite{et}.
        Moreover, according to Wendl \cite{w}, any minimal strong symplectic
        filling of $(T^3, \xi_{can})$ is symplectic deformation equivalent
        to $(DT^*\Sigma_1 \cong T^2 \times D^2, \omega_{can})$.

        The unit cotangent bundle $ST^*N_1$ is diffeomorphic to the
        lens space  $L(4,1)$ and $\xi_{can}$
        is the  unique universally tight contact structure in $L(4,1)$, up to
        contactomorphism. Note that the canonical contact structure on $L(4,1)$ viewed as a singularity link is isomorphic to
        $\xi_{can}$ defined as above.
       It is well-known (see, for example \cite{eo}) that  $(L(4,1),
\xi_{can})$ has
        an adapted open book whose page is
        the 4-holed sphere and whose monodromy is the product of positive Dehn
        twists along four curves each of which is parallel to a boundary component.  Moreover, McDuff
        \cite{mc} showed that $(L(4,1), \xi_{can})$ has two minimal symplectic fillings
        up to diffeomorphism: (i) the disk cotangent bundle $DT^*N_1$, which is a rational homology $4$-ball
        and (ii) the disk bundle over the sphere with Euler number $-4$.

We would like to point out that in an unpublished expository article
        \cite{m}, Massot argues that, for all $g \geq 2$,  the contact
        $3$-manifold $(ST^*\Sigma_g, \xi_{can})$ has an adapted open book
        with genus one pages but he does not describe its monodromy.
        The interested reader can turn to \cite{dh} and \cite{gy}
        for related material.

\section{Exact symplectic Lefschetz fibrations}\label{sec: eslf}
    Suppose that $W$ is a smooth $4$-manifold with nonempty boundary equipped with an exact
    symplectic form $\omega=d \alpha$ such that the Liouville vector field, which is by definition  $\omega$-dual to $\alpha$, is
    transverse to $\partial W$ and points outwards. Then $(W, \omega)$ is called an exact symplectic $4$-manifold with $\omega$-convex boundary
    and it is also called an exact symplectic filling of the contact $3$-manifold $(\partial W,
    \ker(\alpha|_{\partial W}))$ if the contact boundary is desired to be emphasized.

    The definition above can be extended to smooth manifolds with corners as follows (cf. \cite[Section 7a]{s}). Let
$W$ be a smooth $4$-manifold with codimension $2$ corners. An exact
symplectic structure on $W$ is given by a
   symplectic
    $2$-form $\omega= d\alpha$ on $W$ such that the Liouville vector field (again defined as $\omega$-dual to $\alpha$) is
    transverse to each boundary stratum of codimension $1$ and points outwards.
    It follows that $\alpha$ induces a contact form on each boundary
    stratum. Moreover, if the corners of $W$ are rounded off (see
\cite[Lemma 7.6]{s}), it becomes an exact symplectic filling of its
contact  boundary.

\begin{definition} \label{def:exactsym} An exact symplectic Lefschetz fibration on an exact symplectic $4$-manifold
$(W, \omega)$ with codimension $2$ corners is a smooth map $\pi :
(W, \omega)  \to D^2$ satisfying the following conditions:

\begin{itemize}

\item The map $\pi$ has finitely many critical points $p_1, \ldots, p_k$ in the interior of $W$ such
that around each critical point,  $\pi$ is modeled on the map $(z_1,
z_2) \to z_1^2 + z_2^2$ in complex local coordinates compatible with
the orientations.

\item Every fiber of the map $\pi|_{W \setminus \{p_1,
\ldots, p_k\}} : W \setminus \{p_1, \ldots, p_k\} \to D^2$ is a
symplectic submanifold.

\item $\partial W$ consists of two smooth boundary strata $\partial_vW$ (the vertical boundary) and
$\partial_hW$(the horizontal boundary) meeting at a codimension $2$
corner, where $$\partial_vW = \pi^{-1} (\partial D^2)\; \mbox{and}\;
\partial_hW = \bigcup_{z \in D^2} \partial (\pi^{-1} (z) ).$$  We require that
$\pi|_{\partial_vW} :
\partial_vW \to
\partial D^2$ is smooth fibration and $\pi$ is a trivial smooth fibration over $D^2$ near
$\partial_hW$.

\end{itemize}

\end{definition}

The vertical boundary $\partial_vW$ is a surface fibration over the
circle and the horizontal boundary $\partial_hW$ is a disjoint union
of some number of copies of $S^1 \times D^2$. The vertical and
horizontal boundaries  meet each other at the corner $$\partial_vW
\cap
\partial_hW =
\partial(\partial_hW)= \coprod (S^1 \times \partial D^2).$$ Therefore, after
rounding off the corners of $W$, its boundary $\partial W$ acquires
an open book decomposition given by $\pi|_{\partial W \setminus B} :
\partial W \setminus B \to \partial D^2$, where $\partial_hW$ is
viewed as a tubular neighborhood of the binding $B: = \coprod (S^1
\times \{0\})$. Moreover, $\alpha$ restricts to a contact form on
$\partial W$ whose kernel is a contact structure supported by this
open book.

\begin{remark} A \emph{smooth} Lefschetz fibration on a smooth $4$-manifold $W$ with codimension $2$ corners,
is a smooth map  $\pi : W  \to D^2$  which satisfies the first and
the last conditions listed in the Definition~\ref{def:exactsym}.
\end{remark}

Next, we briefly recall (see \cite[Section 16]{s}, \cite[Chapter
8]{gs}) how the topology of the total space of an exact symplectic
Lefschetz fibration
$$\pi : (W, \omega) \to D^2$$ is described using a distinguished basis of
vanishing paths in $D^2$.

Without loss of generality, we can assume that $D^2$ is the unit
disk $\mathbb{D}$ in $\mathbb{C}$. For each critical value $z \in
\mathbb{D}$ of the fibration $\pi$, the fiber $\pi^{-1}(z)$ is
called a \emph{singular} fiber, while the other fibers are called
\emph{regular}. Throughout this paper, we will assume that a regular
fiber is connected and each singular fiber contains a unique
critical point. By setting $z_0=1 \in
\partial \mathbb{D}$, the regular fiber $F=\pi^{-1} (z_0)$, which is a symplectic submanifold of $(W, \omega)$,
serves as a reference fiber in the discussion below.  We call
$z_0$ the base point.

For any critical value  $z \in int \; \mathbb{D}$,  a vanishing path
is an embedded path $\gamma : [0,1] \to \mathbb{D}$ such that
$\gamma(0)=z_0$ and $\gamma(1)=z \in \mathbb{D}$. To each such path,
one can associate its Lefschetz thimble $\Delta_{\gamma}$, which the
unique embedded Lagrangian disk in $(W, \omega)$ such that $\pi
(\Delta_{\gamma}) = \gamma([0,1])$ and $\pi (\partial
\Delta_{\gamma}) = z_0$. The boundary $\partial \Delta_{\gamma}$ of
the Lefschetz thimble is therefore an (exact) Lagrangian circle in
$(F, \omega|_F)$. This circle is called a vanishing cycle since
under a parallel transport along $\gamma$, it collapses to the
unique singular point on the fiber $\pi^{-1} (z)$.

A distinguished basis of vanishing paths is an ordered set of
vanishing paths $(\gamma_1, \ldots, \gamma_k)$ (one for each
critical value of $\pi$) starting at the base point $z_0$ and ending
at a critical value such that   $\gamma_i$ intersects $\gamma_j$
only at $z_0$ for $i \neq j$. Note that  there is a natural
counterclockwise ordering of these paths, by assuming that the
starting directions of the paths are pairwise distinct. Let
$\delta_i$ denote the vanishing cycle in $F$ corresponding to the
vanishing path $\gamma_i$, whose end point---a critical value---is
labeled as $z_i$.

Now consider a small loop, oriented counterclockwise, around the
critical value $z_i$, and connect it to the base point $z_0$ using
the vanishing path $\gamma_i$.  One can consider this loop as a loop
$a_i$ around $z_i$ passing through $z_0$ and not including any other
critical values in its interior. It is a classical fact that
$\pi^{-1}(a_i)$ is a
 surface bundle over $a_i$, which is diffeomorphic to
$$(F \times[0,1])/((x, 1) \sim (D(\delta_i)(x), 0)$$ where $D(\delta_i)$
denotes the positive Dehn twist along the vanishing cycle $\delta_i
\subset F$.

Similarly,  $\pi^{-1} (\partial \mathbb{D})$ is an $F$-bundle over
$\mathbb{S}^1=
\partial \mathbb{D}$ which is diffeomorphic to $$(F \times[0,1])/((x, 1) \sim
(\psi(x), 0)$$ for some self-diffeomorphism $\psi$ of the fiber $F$
preserving $\partial F$ pointwise. The map $\psi$ is called the
geometric monodromy and computed via parallel transport using any
choice of a  connection on the bundle. Note that the isotopy class
of $\psi$ is independent of the choice of the connection.  It
follows that
$$ \psi = D(\delta_k) D(\delta_{k-1}) \cdots D(\delta_1) \in
Map(F, \partial F),$$  where $Map(F, \partial F)$ denotes the
mapping class group of the surface $F$. The product of positive Dehn
twists above is called a \emph{monodromy factorization} or a
\emph{positive factorization} of the monodromy $\psi$ of the
Lefschetz fibration $\pi$.

Note that the vanishing cycle for each singular fiber is determined
by the choice of a  vanishing path ending at the corresponding
critical value. Therefore a different basis of vanishing paths (with
the same rules imposed as above) induce a different factorization of
the monodromy $\psi$. Nevertheless, any two distinguished bases of
vanishing paths are related by a sequence of transformations---the
\emph{Hurwitz moves}. An elementary Hurwitz move is obtained by
switching the order of two consecutive vanishing paths as shown in
Figure~\ref{fig: Hurwitz} keeping the other vanishing paths fixed.
This will have the following affect on the ordered set of vanishing
cycles
$$(\delta_1,\ldots, \delta_{i-1}, \delta_i, \delta_{i+1},
\delta_{i+2}, \ldots, \delta_k) \to (\delta_1,\ldots, \delta_{i-1},
\delta_{i+1}, D(\delta_{i+1})(\delta_i), \delta_{i+2}, \ldots,
\delta_k),$$ which is also called an elementary Hurwitz move. In
general a Hurwitz move is any composition of elementary Hurwitz
moves and their inverses.

    \begin{figure}[h]
                    \begin{center}
                        \includegraphics[width=280pt]{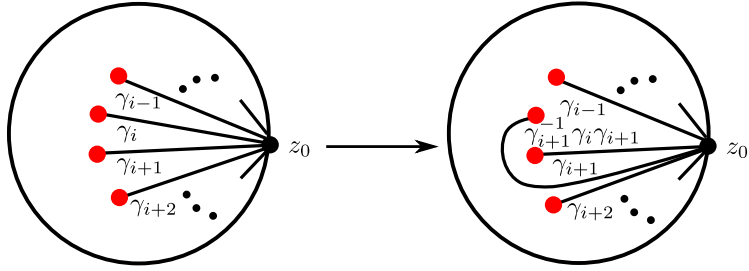}
                        \caption{An elementary Hurwitz move}
                        \label{fig: Hurwitz}
                    \end{center}
    \end{figure}

If one chooses a different base point on $\partial \mathbb{D}$ to
begin with, then the monodromy of the Lefschetz fibration takes the
form $\varphi \psi \varphi^{-1}$, where $\varphi$ is the appropriate
element of $Map(F, \partial F)$, obtained by parallel transport. In
this case, the monodromy factorization appears as
\begin{align*}
& \varphi \psi \varphi^{-1} = \varphi \big(D(\delta_k) D(\delta_{k-1}) \cdots D(\delta_1)\big) \varphi^{-1}\\
& =\varphi D(\delta_k) \varphi^{-1}\varphi
D(\delta_{k-1})\varphi^{-1}\varphi \cdots \varphi^{-1}\varphi D(\delta_1) \varphi^{-1} \\
& =D(\varphi(\delta_k))D(\varphi(\delta_{k-1})) \cdots
D(\varphi(\delta_1)),
\end{align*}
where the last equality follows by the fact that the conjugation
$\varphi D(\delta) \varphi^{-1}$ of a positive Dehn twist
$D(\delta)$ is isotopic to the positive Dehn twist
 $D(\varphi(\delta))$.

Conversely (cf. \cite[Lemma 16.9]{s}),

\begin{lemma}\label{lem: cons} Let $(\delta_1, \ldots, \delta_k)$ be an ordered collection  of
embedded (Lagrangian) circles on an exact symplectic surface $F$
with nonempty boundary.
 Choose a base point $*$ on
$\partial D^2$, and a distinguished basis of vanishing paths
$(\gamma_1, \ldots, \gamma_k)$ starting at $*$. Then there is an
exact symplectic Lefschetz fibration $\pi : (W,\omega) \to D^2$
whose critical values are $\gamma_1(1), \ldots, \gamma_k (1)$, under
which $\delta_i$ corresponds to the vanishing cycle for the path
$\gamma_i$, where $\pi^{-1} (*)= F$ as symplectic manifolds.
Moreover, this fibration is trivial near $\partial_hW$.
\end{lemma}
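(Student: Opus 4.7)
The plan is to build $(W,\omega)$ by successively attaching $k$ \emph{Lefschetz handles} to the trivial exact symplectic fibration, one for each vanishing path, and to check afterwards that the resulting object satisfies all the conditions in Definition~\ref{def:exactsym}.

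I would begin with the base case: the product $W_0 = F \times D^2$, equipped with the exact symplectic form $\omega_0 = d\alpha_F + d\alpha_{\mathrm{std}}$ and the projection $\pi_0 : W_0 \to D^2$. This is already a smooth fibration with $F$-fibers, trivial near the horizontal boundary $(\partial F)\times D^2$, and $\pi_0|_{\partial_vW_0}$ is the smooth bundle projection onto $\partial D^2$. What it lacks is the critical points, which will be introduced one at a time.

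Next I would install the critical points. Choose pairwise disjoint small closed disks $U_i \subset D^2$ around the critical values $\gamma_i(1)$, each meeting $\gamma_i$ in a subarc $\gamma_i'' = \gamma_i\cap U_i$ and disjoint from $\gamma_j$ for $j\neq i$. Let $\gamma_i' = \gamma_i \setminus \mathrm{int}(\gamma_i'')$ be the outer portion, ending at a point $z_i' \in \partial U_i$. Symplectic parallel transport along $\gamma_i'$ (using the $\omega_0$-orthogonal horizontal distribution) carries $\delta_i \subset F = \pi_0^{-1}(*)$ to an exact Lagrangian circle $\widetilde\delta_i \subset \pi_0^{-1}(z_i')$. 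By the Weinstein Lagrangian neighborhood theorem, a tubular neighborhood of $\widetilde\delta_i$ in the fiber is symplectomorphic to a neighborhood of the zero section in $T^*S^1$. I would then remove $F \times U_i$ and glue in the \emph{standard Lefschetz model}: the exact symplectic Lefschetz fibration $\pi_i^{\mathrm{loc}} : W_i^{\mathrm{loc}} \to U_i$ obtained as a compact piece of the $A_1$-Milnor fibration $(z_1,z_2) \mapsto z_1^2 + z_2^2$, normalized so that its generic fiber is $F$, its unique vanishing cycle matches $\widetilde\delta_i$, and it is a smooth trivial $F$-bundle near $\partial_hW_i^{\mathrm{loc}}$. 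The existence of this local model with prescribed vanishing cycle is the content of the Weinstein handle construction (see Seidel~\cite[Lemma 16.1]{s}).

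The hard part will be the \emph{gluing}. To replace $F \times U_i \subset W_0$ by $W_i^{\mathrm{loc}}$ in an exact symplectic manner, one needs to match the Liouville forms on the common boundary $\pi^{-1}(\partial U_i)$ up to the addition of an exact $1$-form, so that a Moser-type argument (relative to the boundary and to the horizontal boundary stratum, where the fibration is already a product) produces a global exact symplectic form extending both sides. This requires (a) using the Weinstein neighborhood model near $\widetilde\delta_i$ to bring the two Liouville structures into a common normal form, and (b) a standard extension argument showing that two exact symplectic structures on $F \times [0,\varepsilon] \times S^1$, with the same fiberwise symplectic form and fibered over the annulus, are isotopic through exact symplectic forms rel boundary. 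Once this is done for each $i$ in turn (the order is irrelevant since the $U_i$ are disjoint), one obtains an exact symplectic $4$-manifold with corners $(W,\omega)$ and a projection $\pi : W \to D^2$.

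Finally, I would verify the conclusions directly: $\pi$ has exactly one Lefschetz-type critical point inside each $\pi_i^{\mathrm{loc}}$ over $\gamma_i(1)$; the fibers are symplectic submanifolds by construction away from the critical points; the restriction $\pi|_{\partial_vW}$ is a smooth $F$-bundle because both $\pi_0|_{\partial_vW_0\setminus \bigcup U_i}$ and each $\pi_i^{\mathrm{loc}}$ over $\partial U_i$ are; $\pi$ is trivial near $\partial_hW$ because both the product piece and each local Lefschetz piece are trivial there; the reference fiber is $\pi^{-1}(*)=F$; and by construction the vanishing cycle over $\gamma_i$ is $\delta_i$, since after parallel-transporting back along $\gamma_i'$ the thimble of $\pi_i^{\mathrm{loc}}$ meets $F$ in $\delta_i$. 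This gives the desired exact symplectic Lefschetz fibration.
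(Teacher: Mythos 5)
The paper itself offers no proof of this lemma: it is quoted directly from Seidel \cite[Lemma 16.9]{s}, whose pasting construction is what you are implicitly trying to reconstruct. Your outline, however, breaks down at the gluing step. If you delete $F\times U_i$ from the product $F\times D^2$ and try to insert a local model $\pi_i^{\mathrm{loc}}:W_i^{\mathrm{loc}}\to U_i$ with a single critical point and vanishing cycle $\delta_i$, the two pieces cannot be matched along $\pi^{-1}(\partial U_i)$: the restriction of any Lefschetz fibration with one critical point over $U_i$ to the circle $\partial U_i$ is the mapping torus of the Dehn twist $D(\delta_i)$, whereas the restriction of the untouched product piece is the trivial bundle $F\times\partial U_i$. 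Surface bundles over the circle are classified by their monodromy up to isotopy, so whenever $D(\delta_i)$ is not isotopic to the identity (which is the case for every essential $\delta_i$, in particular for all the vanishing cycles occurring in this paper, and certainly rel $\partial F$, as your requirement of triviality near $\partial_hW$ forces), no such gluing exists. The same error surfaces in your final verification: the construction you describe would make $\pi|_{\partial_vW}$ the trivial $F$-bundle over $\partial D^2$, whereas the fibration to be constructed must have boundary monodromy $D(\delta_k)\cdots D(\delta_1)$, which is nontrivial. Your Moser/Liouville-matching discussion therefore addresses only the symplectic refinement of a smooth gluing that does not exist as stated.

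The missing ingredient is a clutching along the vanishing paths, or equivalently the handle attachment you announce in your first sentence but then abandon. One repair: cut the product piece $F\times(D^2\setminus\bigcup U_i)$ along $F\times c_i$, where $c_i$ is an arc joining $\partial U_i$ to $\partial D^2$ near $\gamma_i$, and reglue by a fiberwise model Dehn twist supported in a Weinstein tubular neighborhood of $\delta_i$; after this modification the bundle over $\partial U_i$ has monodromy $D(\delta_i)$ and the local model can be inserted, and your Weinstein-neighborhood normal form plus Moser argument then do finish the exact symplectic matching. Alternatively, and more in line with the rest of the paper, attach to $(F\times D^2,\,d\alpha_F+d\alpha_{\mathrm{std}})$ one Weinstein $2$-handle along a Legendrian realization of each $\delta_i$ placed in pairwise distinct fibers over $\partial D^2$, with framing $-1$ relative to the fiber framing; this is exactly the mechanism used in Section~2 for positive stabilizations and it produces the correct boundary monodromy and the triviality near $\partial_hW$ automatically. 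A smaller inaccuracy: the compact piece of the $A_1$-Milnor fibration has annulus fibers $DT^*S^1$, not fiber $F$, so ``normalizing its generic fiber to be $F$'' really means forming Seidel's standard fibration by extending the model fiberwise with the trivial piece $(F\setminus N(\delta_i))\times U_i$ via the Lagrangian neighborhood identification (cf. \cite{s}); also note that in the product fibration symplectic parallel transport is the identity, so the transported curve $\widetilde\delta_i$ is just $\delta_i$ itself.
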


\begin{definition} A conformal exact
symplectomorphism between two exact symplectic $4$-manifolds $(W_1,
\omega_1=d\alpha_1)$ and  $(W_1, \omega_2=d\alpha_2)$ is a
diffeomorphism $\phi : W_1 \to W_2$ such that $\phi^*\alpha_2 =
K\alpha_1 + df$ for some smooth function $f: W_1 \to \mathbb{R}$,
and some real number $K>0$. If $K=1$, then $\phi$ is called an exact
symplectomorphism.
\end{definition}

\begin{remark} The definition above also applies to maps between exact symplectic $4$-manifolds with codimension $2$ corners. \end{remark}

\begin{lemma} \label{lem: hur} Suppose that $\pi: (W, \omega) \to D^2$ is an exact symplectic Lefschetz
fibration whose ordered set of vanishing cycles is given by $$
(\delta_1,\ldots, \delta_{i-1}, \delta_i, \delta_{i+1},
\delta_{i+2}, \ldots, \delta_k)$$ with respect to some distinguished
basis of vanishing paths. Then there is an exact symplectic
Lefschetz fibration $\widetilde{\pi} : (\widetilde{W},
\widetilde{\omega}) \to D^2$ whose ordered set of vanishing cycles
is given by $$ (\delta_1,\ldots, \delta_{i-1}, \delta_{i+1},
D(\delta_{i+1})(\delta_i), \delta_{i+2}, \ldots, \delta_k)$$ with
respect to some distinguished basis of vanishing paths, such that
$\pi$ and $\widetilde{\pi}$ are isomorphic through an exact
symplectomorphism $\phi: (W, \omega) \to (\widetilde{W},
\widetilde{\omega})$.
\end{lemma}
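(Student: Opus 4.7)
\textbf{My plan} is to take $(\widetilde W, \widetilde\omega) = (W, \omega)$, $\widetilde\pi = \pi$, and $\phi = \operatorname{id}_W$. The map $\phi$ is trivially an exact symplectomorphism, so the lemma reduces to exhibiting a new distinguished basis of vanishing paths for the \emph{same} Lefschetz fibration $\pi$ whose associated ordered vanishing cycles realize the Hurwitz-transformed collection.

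First I would produce the new basis by braiding. Let $z_j = \gamma_j(1)$ and choose a closed disk $U \subset \mathbb{D}$ whose interior contains exactly the two critical values $z_i, z_{i+1}$, is disjoint from $\gamma_j$ for $j \notin \{i, i+1\}$, and meets $\gamma_i \cup \gamma_{i+1}$ in two disjoint embedded arcs entering $U$ through $\partial U$. Inside $U$, I would replace these two arcs by the braided arcs of Figure~\ref{fig: Hurwitz}, producing new paths $\widetilde\gamma_i$ (now ending at $z_{i+1}$) and $\widetilde\gamma_{i+1}$ (now ending at $z_i$); outside $U$ I leave everything alone, setting $\widetilde\gamma_j = \gamma_j$ for $j \neq i, i+1$. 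By inspection, $(\widetilde\gamma_1, \ldots, \widetilde\gamma_k)$ is again a distinguished basis of vanishing paths for $\pi$ at the base point $z_0$.

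Next I would identify the new vanishing cycles. For $j \notin \{i, i+1\}$ the $j$-th cycle is unchanged at $\delta_j$. Since $\widetilde\gamma_i$ is homotopic rel endpoints to $\gamma_{i+1}$ in the complement of the remaining critical values, its vanishing cycle is (isotopic to) $\delta_{i+1}$. The path $\widetilde\gamma_{i+1}$ is homotopic, in the complement of the remaining critical values, to the concatenation of a small loop $\alpha$ at $z_0$ encircling $z_{i+1}$ with $\gamma_i$, where the orientation of $\alpha$ is forced by the picture in Figure~\ref{fig: Hurwitz}. Parallel transport along this concatenation is parallel transport along $\gamma_i$ precomposed with the monodromy of $\alpha$; applying the dictionary recalled just before the statement (a counterclockwise loop around $z_{i+1}$ contributes the positive Dehn twist $D(\delta_{i+1})$) one reads off that the cycle on $F$ collapsing to the critical point above $z_i$ is exactly $D(\delta_{i+1})(\delta_i)$. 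Together with $\phi = \operatorname{id}_W$, this proves the lemma.

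\textbf{The main obstacle} is orientation bookkeeping in the last step: one must check that the orientation of $\alpha$ produced by the braid of Figure~\ref{fig: Hurwitz} gives the positive Dehn twist $D(\delta_{i+1})$ rather than its inverse, so that the cycle comes out as $D(\delta_{i+1})(\delta_i)$ and not $D(\delta_{i+1})^{-1}(\delta_i)$. This is routine given the conventions already fixed and requires no new technology beyond the setup preceding the statement.
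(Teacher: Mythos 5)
Your argument is correct, but it takes a genuinely different route from the paper. The paper does not set $\widetilde{\pi}=\pi$: it first invokes Lemma~\ref{lem: cons} to \emph{construct} a fibration $\widetilde{\pi}:(\widetilde{W},\widetilde{\omega})\to D^2$ whose vanishing cycles, with respect to the given basis of paths, are the transformed tuple; it then performs an inverse elementary Hurwitz move on that basis so that the cycles of $\widetilde{\pi}$ agree up to isotopy with those of $\pi$; and it concludes by the uniqueness principle that an exact symplectic Lefschetz fibration is determined, up to isomorphism through an exact symplectomorphism, by its regular fiber and the isotopy class of its ordered vanishing cycles. You instead keep the fibration, re-choose the distinguished basis by braiding the $i$-th and $(i+1)$-st paths, and take $\phi=\mathrm{id}$; the whole content of your proof is then the classical effect of an elementary Hurwitz move on vanishing cycles, which the paper itself records (with Figure~\ref{fig: Hurwitz}) just before the lemma and which the paper's proof also uses, in inverse form. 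So your proof is more economical---it needs neither Lemma~\ref{lem: cons} nor the uniqueness theorem---and it does establish the literal existence statement, granted two points you use implicitly: that homotoping a vanishing path rel endpoints in the complement of the critical values changes the vanishing cycle only by isotopy (harmless, since $D(\delta_{i+1})(\delta_i)$ is in any case only defined up to isotopy), and the orientation check you flag, which is settled by the convention of Figure~\ref{fig: Hurwitz}. What the paper's route buys is precisely what is needed later: in the genus-one discussion and in Corollary~\ref{cor: elf} one compares two \emph{independently constructed} fibrations with Hurwitz-equivalent vanishing-cycle data (e.g.\ $\pi_1$ and the fibration built from Van Horn-Morris's factorization), where the trivial choice $\widetilde{W}=W$ is unavailable; there the uniqueness-given-cycles statement is the real engine, and the paper's proof keeps that dependence visible, whereas with your proof those applications would require citing the uniqueness fact separately.
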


\begin{proof} By Lemma~\ref{lem: cons}, there is an exact symplectic Lefschetz fibration $\widetilde{\pi}
: (\widetilde{W}, \widetilde{\omega}) \to D^2$  whose ordered set of
vanishing cycles is given by $$ (\delta_1,\ldots, \delta_{i-1},
\delta_{i+1}, D(\delta_{i+1})(\delta_i), \delta_{i+2}, \ldots,
\delta_k)$$ \emph{with respect to the distinguished basis of
vanishing paths of $\pi$}. Now we apply an elementary inverse
Hurwitz move on this distinguished basis of vanishing paths of
$\widetilde{\pi}$, so that the ordered set of vanishing cycles of
$\widetilde{\pi}$ agrees, up to isotopy, with the ordered set of
vanishing cycles of $\pi$. Note that we keep the fibration
$\widetilde{\pi}$ fixed, while modifying its distinguished basis of
vanishing paths.

It follows that $\pi$ and $\widetilde{\pi}$ are two exact symplectic
Lefschetz fibrations whose ordered set of vanishing cycles are
isotopic. The result follows by the fact that an exact symplectic
Lefschetz fibration is uniquely determined---up to isomorphism via
an exact symplectomorphism of its total space---by its regular fiber
and the isotopy class of its ordered set of vanishing cycles.
\end{proof}

It is well-known that a positive stabilization of a smooth Lefschetz
fibration is a smooth Lefschetz fibration. In the following we
briefly explain positive stabilizations of exact symplectic
Lefschetz fibrations (cf. \cite[Appendix A]{mcl}).

 A positive stabilization of an exact symplectic Lefschetz
fibration $\pi : (W, \omega) \to D^2$ along a properly embedded
(Lagrangian) arc $\beta$ in $(F, \omega|_F)$, where $F$ is the
reference regular fiber of $\pi$ as above, is another an exact
symplectic Lefschetz fibration $\pi'' : (W'', \omega'') \to D^2$
defined as follows.

First, we attach a $4$-dimensional Weinstein $1$-handle to $(W,
\omega)$ along the two endpoints of $\beta \subset F \subset
\partial W$ such that $\omega$ extends over the $1$-handle as an exact
symplectic form $\omega'=d\alpha'$ to obtain an exact symplectic
Lefschetz fibration $\pi': (W', \omega') \to D^2$ which agrees with
$\pi$ when restricted to $(W, \omega)$. In order to see this, we
view the $4$-dimensional Weinstein $1$-handle as a thickening $D^2
\times H^2_1$ of the $2$-dimensional Weinstein $1$-handle $H^2_1$,
where $D^2$ is the base disk of the fibration equipped with the
standard symplectic structure. In other words, we extend each fiber
of $\pi$ by attaching a Weinstein $1$-handle $H_1^2$, so that the
exact symplectic form $\omega$ extends fiberwise.  In particular,
the reference regular fiber $F'= F \cup H_1^2$ of $\pi'$ is obtained
by attaching  $H_1^2$ to $F$ along the endpoints of $\beta$. Let
$\beta' \subset (F', \omega'|_{F'})$ denote the closed Lagrangian
curve obtained from $\beta$  by gluing in the core circle of
$H_1^2$.

Next, we attach a $4$-dimensional $2$-handle to $(W', \omega')$
along the curve $\beta' \subset F' \subset
\partial W'$ with framing $-1$ relative to its fiber framing. It is a classical fact (\cite[Section 8.2]{gs}) that
the result is a smooth Lefschetz fibration $\pi'' : W'' \to D^2$,
which has one more critical point (with vanishing cycle $\beta'$) in
addition to those of $\pi': (W', \omega') \to D^2$.  Moreover, by
the Legendrian Realization Principle \cite{h},  $\beta'$ can be
realized as a Legendrian curve on $F'$ in the contact boundary
$(\partial W', \ker (\alpha'))$ so that its contact framing agrees
with its fiber framing. It follows that the aforementioned
``Lefschetz" $2$-handle can be considered as a Weinstein $2$-handle
(see, for example, \cite[Section 7.2]{ozst})  and hence $W''$ admits
an exact symplectic form  $\omega''$ which restricts to $\omega'$ on
$W'$.

All we have to argue now is that $\omega''$ restricts to a
symplectic structure on the fibers of the smooth Lefschetz fibration
$\pi''$. To see this, we consider the standard local model (see,
\cite[Example 15.4]{s}) around a critical point in an exact
symplectic Lefschetz fibration, where a regular fiber is
symplectomorphic to the disk cotangent bundle $DT^*S^1$ of a circle,
equipped with its canonical symplectic structure $\lambda_{can}$. We
also note that around the new critical point at the origin of the
model Weinstein $2$-handle, the smooth Lefschetz fibration agrees
\emph{smoothly} with the standard local model of an exact symplectic
Lefschetz fibration. But, since in both models we use the standard
symplectic structure on $\mathbb{R}^4 = \mathbb{C}^2$, the smooth
Lefschetz fibration can be simply viewed as an exact symplectic
Lefschetz fibration.

The point is that the fibers of $\pi'$ is already symplectic and by
attaching the Weinstein $2$-handle along the Lagrangian curve
$\beta'$ in a symplectic fiber $(F', \omega'|_{F'})$ on the
boundary,  we identify a symplectic neighborhood of $\beta' \subset
(F', \omega'|_{F'})$ with
 $(DT^*S^1, \lambda_{can})$ by the Lagrangian neighborhood theorem.  As a matter of fact,
 the fibers
 of $\pi'$ and $\pi''$ are symplectomorphic, where the monodromy of
 $\pi''$ is obtained by composing the monodromy of
 $\pi'$ by a symplectic Dehn twist around $\beta'$.

Finally, since the attaching sphere of the $2$-handle intersects the
belt sphere of the $1$-handle at a unique point, these two handles
cancel each other out smoothly. Moreover, this cancelation also
takes place in the symplectic category, up to completion, by a
theorem of Eliashberg \cite[Lemma 3.6b]{el2} (see also \cite{ce} or
\cite[Lemma 3.9]{vk}).

The discussion above can be summarized as follows.

\begin{lemma} \label{lem: stab} Any positive stabilization of an exact symplectic Lefschetz fibration
is an exact symplectic Lefschetz fibration. Moreover,
if $\pi'': (W'', \omega'') \to D^2$ is a positive stabilization of
an exact symplectic Lefschetz fibration $\pi : (W, \omega) \to D^2$,
then $(W'', \omega'')$ and $(W, \omega)$ have symplectomorphic
completions.
\end{lemma}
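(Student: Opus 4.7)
The plan is to follow the construction outlined just above the statement, showing that each of the three handle-attachment steps can be done in the exact symplectic category (not merely smoothly), and then invoking Eliashberg's cancellation theorem for the completion assertion.

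First I would set up the positive stabilization as a two-handle operation along a properly embedded Lagrangian arc $\beta \subset F$ in the reference fiber. The first step is to attach a $4$-dimensional Weinstein $1$-handle along the two endpoints of $\beta \subset F \subset \partial W$, viewing the $4$-handle as a thickening $D^2 \times H_1^2$ of a $2$-dimensional Weinstein $1$-handle $H_1^2$; this extends $\omega$ to an exact symplectic form $\omega' = d\alpha'$ on $W'$ and extends each fiber $F$ to $F' = F \cup H_1^2$, producing an exact symplectic Lefschetz fibration $\pi' : (W', \omega') \to D^2$ that restricts to $\pi$ on $W$. The closed curve $\beta' \subset F'$ obtained by gluing the core of $H_1^2$ to $\beta$ is Lagrangian in $(F', \omega'|_{F'})$.

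Next I would attach a $4$-dimensional $2$-handle to $(W', \omega')$ along $\beta'$ with framing $-1$ relative to the fiber framing; smoothly this is the standard description of adding one Lefschetz critical point with vanishing cycle $\beta'$, producing a smooth Lefschetz fibration $\pi'' : W'' \to D^2$. The key point is to promote this to an exact symplectic structure: by the Legendrian Realization Principle, $\beta'$ can be isotoped (inside the convex surface $F'$) to a Legendrian curve in $(\partial W', \ker \alpha')$ whose contact framing matches its fiber framing, so the $2$-handle can be taken to be a Weinstein $2$-handle. Hence $\omega'$ extends to an exact symplectic form $\omega''$ on $W''$.

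The main technical step is verifying that $\omega''$ still restricts to a symplectic form on every fiber of $\pi''$, so that $\pi''$ is an exact symplectic Lefschetz fibration in the sense of Definition~\ref{def:exactsym}. Away from the new critical point this is immediate since nothing changes from $\pi'$. Near the new critical point I would invoke the standard local model around a Lefschetz critical value, in which a regular fiber is symplectomorphic to $(DT^*S^1, \lambda_{can})$; by the Lagrangian neighborhood theorem, a neighborhood of the Lagrangian $\beta' \subset (F', \omega'|_{F'})$ is symplectomorphic to this model, so the Weinstein $2$-handle attachment can be identified with the standard local model $(z_1,z_2) \mapsto z_1^2 + z_2^2$ on $\mathbb{C}^2$ with its standard symplectic form. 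This identifies a neighborhood of the new critical point with the standard local model and makes the fibers symplectic there.

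Finally, for the assertion about completions, I would observe that the attaching sphere of the $2$-handle meets the belt sphere of the $1$-handle transversely in a single point, so these handles cancel smoothly. By Eliashberg's handle cancellation theorem \cite[Lemma 3.6b]{el2}, this cancellation also holds in the exact symplectic category up to completion, yielding symplectomorphic completions of $(W, \omega)$ and $(W'', \omega'')$. The only real obstacle in the argument is the verification of the exact symplectic structure around the newly introduced critical point; everything else is essentially bookkeeping once one accepts the standard local models and the handle cancellation theorem.
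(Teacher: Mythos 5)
Your proposal is correct and follows essentially the same argument as the paper: fiberwise Weinstein $1$-handle attachment, Legendrian realization of $\beta'$ so the Lefschetz $2$-handle is a Weinstein handle, the standard local model together with the Lagrangian neighborhood theorem to keep the fibers symplectic near the new critical point, and Eliashberg's cancellation theorem for the statement about completions. No gaps to report.
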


Moreover, the open book on $\partial W''$ induced by $\pi''$ is
obtained by a positive stabilization of the open book on $\partial
W$ induced by $\pi$, by definition. Therefore the contact manifold
$(\partial W'', \ker(\alpha''))$ is contactomorphic to the contact
manifold $(\partial W, \ker(\alpha))$, where $\omega''=d \alpha''$
and $\omega=d \alpha$.

\section{Explicit open book decompositions adapted to the unit contact cotangent
bundle} \label{sec: exp}

For any closed surface $S$,   Johns \cite{Jo} constructed an exact
symplectic Lefschetz fibration $\pi: (E, \omega) \rightarrow D^{2}$
such that $(E, \omega)$ is
    conformally exact symplectomorphic to the disk cotangent bundle $DT^{*}S$ equipped
    with its canonical symplectic form
$\omega_{can}$. In the following we give a brief summary of Johns'
work.

   Johns' initial idea was to try to ``complexify"  a Morse function $f : S \to \mathbb{R}$
    in order to find a Lefschetz fibration $\pi : DT^{*}S \to \mathbb{C}$, generalizing the work of A'Campo
    \cite{ac}. Since, this method turned out to be difficult, he
    took a different approach instead.

Modifying a simple construction of a Lefschetz pencil on
    $\mathbb{CP}^2$ discussed in \cite[Section 5.2]{as}, Johns first worked out the case of
    $S=\mathbb{RP}^2$ obtaining a Lefschetz fibration $\pi : DT^{*} \mathbb{RP}^2 \to
    D^2$ with three vanishing cycles explicitly described on the fiber, a
    $4$-holed sphere. The key point in his construction is that the
    map
    $\pi$ restricted to the standard embedding of $\mathbb{RP}^2$
    into $\mathbb{CP}^2$ is the standard Morse function on $\mathbb{RP}^2$
    with three critical points.

    As a second example, Johns worked out the case $S=T^2$. Starting from a Lefschetz fibration
    $\mathbb{C}^* \times \mathbb{C}^* \to \mathbb{C}$,
    he obtained a Lefschetz fibration $\pi : DT^{*} T^2 \to
    D^2$ arising from the embedding $T^2 = S^1 \times S^1 \subset \mathbb{C}^* \times
    \mathbb{C}^*$. Again, he showed that $\pi$ restricted to $T^2$
    is a Morse function on $T^2$ with four critical points.  The
    regular fiber of the Lefschetz fibration $\pi$ in this case is a
    $4$-holed torus, although Johns did not explicitly describe the
    four vanishing cycles.

Nevertheless, based on the pattern occurring in these basic
examples, Johns was able to have an educated guess on how the fiber
and the vanishing cycles would look like for a Lefschetz fibration
on $DT^{*}S$ for a general compact surface $S$ without boundary.

Starting with a Morse function $f: S \to \mathbb{R}$ with one
minimum, one  maximum and $m$ index $1$ critical points, Johns
constructed a Lefschetz fibration $\pi: E \rightarrow D^{2}$ by
describing its regular fiber, a \emph{necessarily orientable}
surface $F$ obtained from the annulus by attaching $2m$ one handles,
and giving explicitly  the set of vanishing cycles consisting of
$m+2$ simple closed curves on $F$.

Here the annulus $S^1 \times [-1,1]$ can be viewed as the disk
cotangent bundle $DT^*V_0$, where $V_0$ is the vanishing cycle
corresponding to the minimum of $f$. For each index $1$ critical
point of $f$, two $1$-handles are attached to the annulus and the
attachment of  these $1$-handles can be viewed as the plumbing of
$DT^*V_0$ with another disk cotangent bundle  $DT^*V^i_1$, where
$V^i_1$ denotes the vanishing cycle corresponding to that index $1$
critical point. There are two kinds of plumbing descriptions,
however, depending on whether the index $1$ critical point of $f$
induces an orientable or a non-orientable $1$-handle in the handle
decomposition of $S$.

   \begin{figure}[h]
                    \begin{center}
                        \includegraphics[width=350pt]{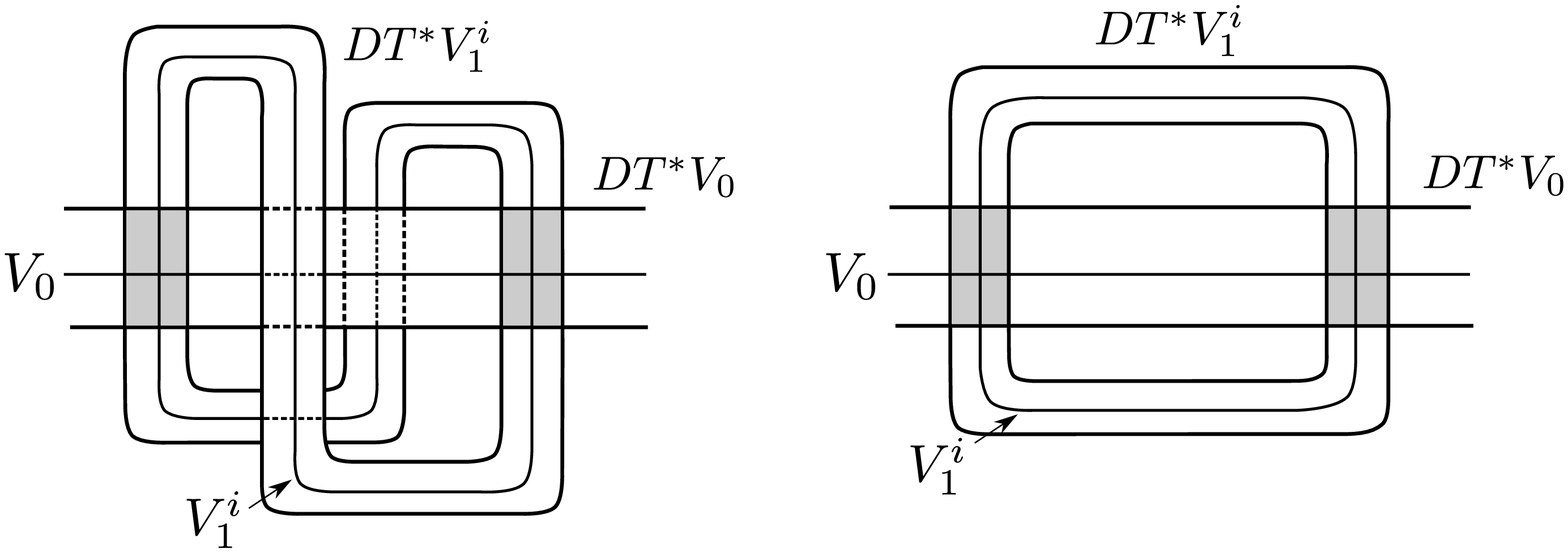}
                        \caption{Plumbing description: orientable case on the left, non-orientable case on the right }
                        \label{fig: plumbing}
                    \end{center}
    \end{figure}

Finally, there is one last vanishing cycle $V_2$, corresponding to
the maximum of $f$, obtained by the Lagrangian surgery (see
Figure~\ref{fig:  Lagrangian surgery}, for an example) of $V_0$ with
the union $\bigcup_{i=1}^{m} V^1_i$.

 \begin{figure}[h]
                    \begin{center}
                        \includegraphics[width=250pt]{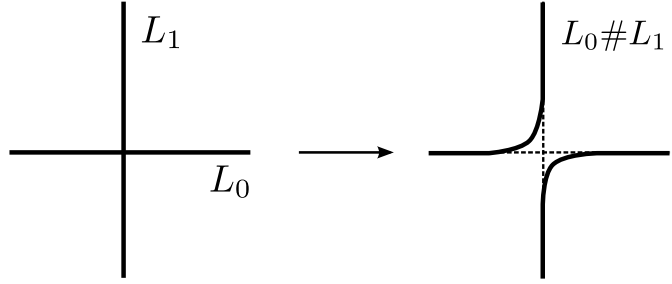}
                        \caption{Left: Two Lagrangian curves $L_0$ and $L_1$ on a surface intersecting locally as shown. Right:
                        Lagrangian surgery $L_0 \# L_1$}
                        \label{fig: Lagrangian surgery}
                    \end{center}
    \end{figure}
By the discussion in Section~\ref{sec: eslf}, the $4$-manifold $E$
admits an exact symplectic form $\omega$, for which $\pi : E \to
D^2$ is an exact symplectic Lefschetz fibration. Moreover, Johns
verified that

\begin{itemize}

\item $S$ admits an exact Lagrangian embedding into $E$.

\item The critical points of $\pi$ lie on $S$, $\pi(S) \subset
\mathbb{R}$, and $\pi|_S= f$.

\item The symplectic manifold $(E, \omega)$,  after smoothing the
corners of $E$, is conformally exact symplectomorphic to $(DT^{*}S,
\omega_{can})$.

\end{itemize}

Apparently, the most difficult step is the first item above. In
order to find such an embedding Johns used a ``Milnor-type" handle
decomposition---a more refined version of a usual handle
decomposition---of the surface $S$, referring to \cite[pages
27-32]{mi}.  Once this is achieved, the second item follows from the
first by construction. The last item is essentially a retraction of
$E$, by a Liouville type flow,  onto a small Weinstein neighborhood
of $S$, which is symplectomorphic to $(DT^*S, \omega_{can})$.

In order to prove the main results of our article, we focus on the
orientable surface case in Section~\ref{sec: ori}, while in
Section~\ref{sec: nori}, we treat the non-orientable surface case.
For both cases, we use a handle decomposition of a closed surface
induced by the standard Morse function with one minimum and one
maximum, although this assumption can be removed as pointed out in
\cite[Section 4.3]{Jo}.

\subsection{Unit contact cotangent bundles of orientable
surfaces}\label{sec: ori} In this section, we assume that $S$ is a
closed \emph{orientable} surface of genus $g$, which we denote by
$\Sigma_g$. We also denote the exact symplectic Lefschetz fibration of Johns
described  above by $\pi_g: (W_g, \omega_g) \rightarrow D^{2}$,
where $(W_g, \omega_g)$ is
    conformally exact symplectomorphic to $(DT^{*}\Sigma_{g},
    \omega_{can})$. We first review the Lefschetz fibration $\pi_g$, primarily
    focusing on its topological aspects.

    The regular fiber $F_g$ of $\pi_g$ is diffeomorphic to an  oriented genus $g$ surface with $2g+2$ boundary components.
    In the following, we describe the construction of $F_g$, referring
    to Figure~\ref{fig: Rg}.

\begin{figure}[h]
                    \begin{center}
                        \includegraphics[width=350pt]{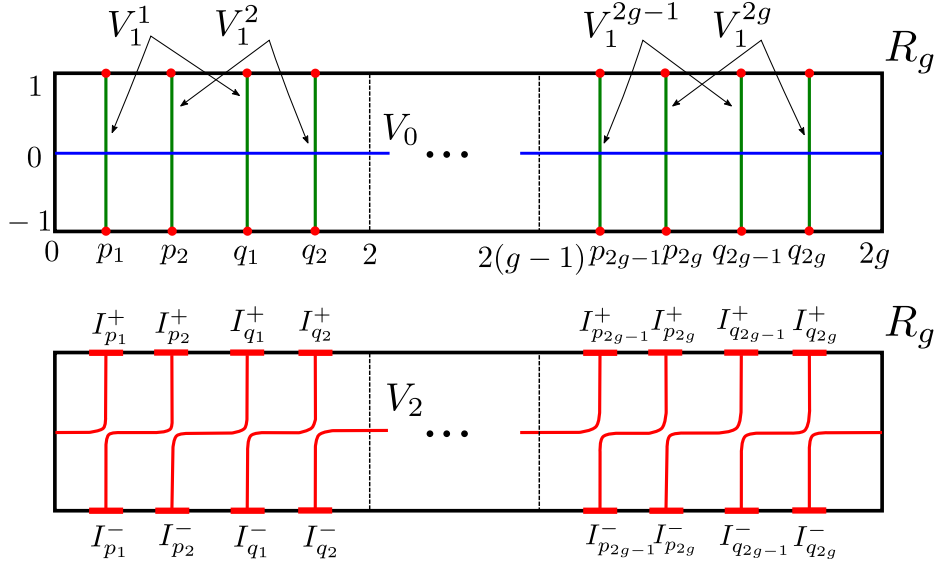}
                        \caption{The vanishing cycles $V_{0}, V_{1}^{1}, \dots, V_{1}^{g},
                        V_{2}$ }
                        \label{fig: Rg}
                    \end{center}
    \end{figure}

    Let $R_{g}$ denote the rectangle $[0,2g] \times [-1,1]$ in $ \R^{2}$ equipped with the standard
    orientation. We fix the following points  $$p_{2i-1} := 2(i-1)+ 1/3,\;  p_{2i} :=
2(i-1)+2/3,$$ $$q_{2i-1}:= 2(i-1)+4/3, \; q_{2i} := 2(i-1)+5/3,$$
    for $i=1,2,\dots, g$, on the $x$-axis. For a sufficiently small $\varepsilon >0$, we set
$$I^{\pm}_{p_{j}}:= [p_{j}-\varepsilon, p_{j} + \varepsilon] \times \{ \pm 1\} ,
    \ I^{\pm}_{q_{j}} := [q_{j}-\varepsilon, q_{j}+\varepsilon] \times \{ \pm 1\} \subset R_{g}$$
for $j=1,2, \dots,
    2g$. We identify $\{0\} \times [-1,1] \subset R_{g}$ with $\{2g \} \times
[-1,1] \subset R_{g}$ to obtain an annulus initially. Next, for each
$j=1,2, \ldots,
    2g$, we identify $I_{p_{j}}^{+}$ with $I_{q_{j}}^{-}$, and
$I_{p_{j}}^{-}$ with $I_{q_{j}}^{+}$ so that
    the orientation of $R_{g}$ extends to the resulting surface
    $F_g$.

    Note that, for each $j=1,2, \ldots,
    2g$, these identifications can be viewed as attaching two
    $1$-handles,  which is the same as plumbing an annulus as shown
    on the left in Figure~\ref{fig: plumbing}.

    By calculating the Euler characteristic, for example,
it can be easily seen that  $F_g$ is diffeomorphic to an  oriented
genus $g$ surface with $2g+2$ boundary components.

   By fixing a certain choice of distinguished set of vanishing
     paths, the $2g+2$ vanishing cycles $V_{0}, V_{1}^{1}, V_{1}^{2}, \dots, V_{1}^{2g},
     V_{2}$ of the Lefschetz fibration $\pi_g$,
    are given as follows. The vanishing cycle
    $V_{0}$ is the simple closed curve in $F_g$ obtained from $[0, 2g] \times \{0\} \subset R_{g}$ through the above identifications.
     Similarly, the simple closed curve $V_{1}^{j} \subset F_g$ is obtained from $(\{ p_{j}\} \times [-1,1]) \cup (\{q_{j}\} \times
     [-1,1]) \subset R_{g}$. Equivalently, $V_{1}^{j}$ is the core circle of the
     annulus that appears in the plumbing description (see Figure~\ref{fig: plumbing}). The vanishing cycle $V_{2} \subset F_g$ comes from the Lagrangian
surgery of $V_{0}$ and $\cup_{i=1}^{2g}V_{1}^{j}$ as
    depicted at the bottom of Figure \ref{fig: Rg}.

    Next we show that  the vanishing cycles $V_{0}, V_{1}^{1}, V_{1}^{2}, \dots, V_{1}^{2g},
     V_{2}$ can be presented with a different point of view, by reconstructing
     $F_g$ as follows. Let
    $$A_{i}:= [2(i-1), 2i] \times [-1,1] \subset R_{g}$$
    for $i=1,2,\dots,g$ and let
    $$J_{j} := \{ j \} \times [-1,1]$$
    for $j=0,1,\dots, 2g$. It is clear that $R_{g} =
    \cup_{i=1}^{g}A_{i}$. Then we divide each $A_{i}$ into two pieces $$A_{i}^{+}:= [2(i-1),
2i-1] \times [-1,1], \; \ A_{i}^{-}:=[2i-1,2i] \times [-1,1]$$ and
    put $A_{i}^{+}$ vertically on top of $A_{i}^{-}$ as shown in Figure \ref{fig: identification} (a).

    Since $I^{\pm}_{p_{2i-1}}$ and $I^{\pm}_{p_{2i}}$ belong to $A_{i}^{+}$
and $I^{\pm}_{q_{2i-1}}$ and $I^{\pm}_{q_{2i}}$ belong to
$A_{i}^{-}$,
    we can glue $A_{i}^{+} $ and $A_{i}^{-}$ along these intervals.
    Each of these gluings is
    represented by a $1$-handle in Figure \ref{fig: identification} (b). Moreover,
    we identify $J_{2i-1} \subset A_{i}^{+}$ with $J_{2i-1}
\subset A_{i}^{-}$, which is also represented by a $1$-handle. There
is another $1$-handle associated to the identification of $J_{2i}
\subset A_{i}^{-}$ with $J_{2i} \subset  A_{i+1}^{+}$. We slide this
$1$-handle over the one coming from the identification of
$I_{p_{2i}}^{+}$ with $I_{q_{2i}}^{-}$ as indicated in Figure \ref{fig: identification} (c).

    Starting from the diagram in Figure \ref{fig: identification} (c), and performing isotopies as shown in
 Figure \ref{fig: isotopy}, we now obtain a genus $1$ surface with
$3$ boundary components.
    We call this surface the ``building block", and  denote it by
    $F_g^{i}$. The key point is that the surface $F_g$ can be
    constructed by assembling these building blocks, which looks pairwise identical.

\begin{figure}[h]
                    \begin{center}
                        \includegraphics[width=315pt]{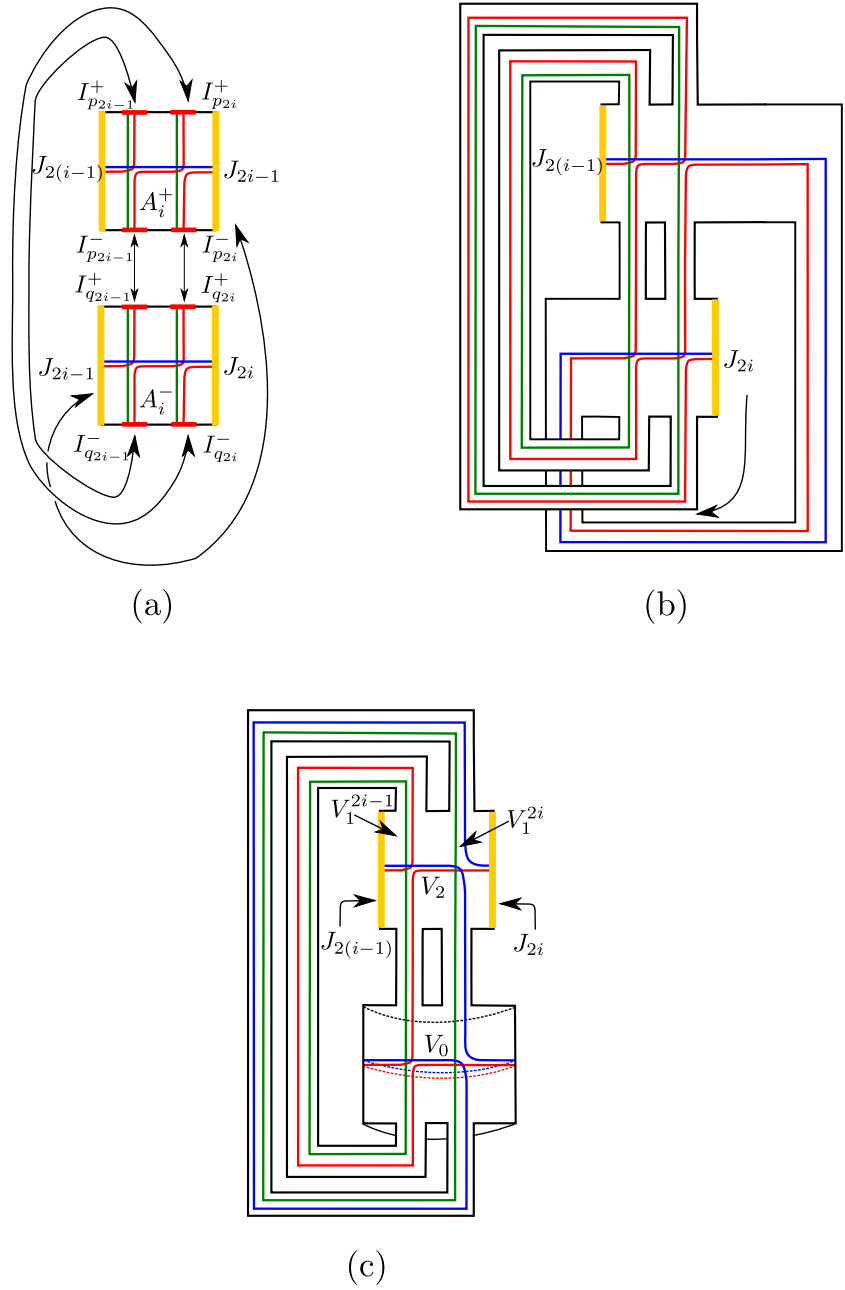}
                        \caption{The building block $F_g^i$}
                        \label{fig: identification}
                    \end{center}
    \end{figure}
    Note that the vanishing cycles can also be isotoped through the identifications and isotopies described above.
As a result, in each $F_g^{i}$, we see two arcs $\tilde{V}_{0}^{i}$
and  $\tilde{V}_{2}^{i}$ which are subarcs of $V_{0}$ and $V_{2}$,
respectively. We also see two simple closed curves $V_{1}^{2i-1}$
and $V_{1}^{2i}$, as depicted on the right in Figure \ref{fig:
isotopy}.
    Finally, to describe $F_g$ and the vanishing cycles $V_{0}, V_{1}^{1}, \dots, V_{1}^{2g},
    V_{2}$, we
    arrange $F_g^{1}, \dots, F_g^{g}$ in a circular position, glue $F_g^{i}$ to $F_g^{i+1}$ along $J_{2i}$ for $i=1,\dots, g-1$
    and glue $F_{g}^g$ to $F_g^{1}$ along $J_{2g}$ and $J_{0}$ as shown
    in  Figure \ref{fig: vanishing cycle}.

    Since the fiber $F_g$ is a genus
    $g$ surface with $2g+2$ boundary components, we opted to denote it with
    $\Sigma_{g,2g+2}$ in Theorem~\ref{thm: ope}.

    \begin{figure}[h]
                    \begin{center}
                        \includegraphics[width=460pt]{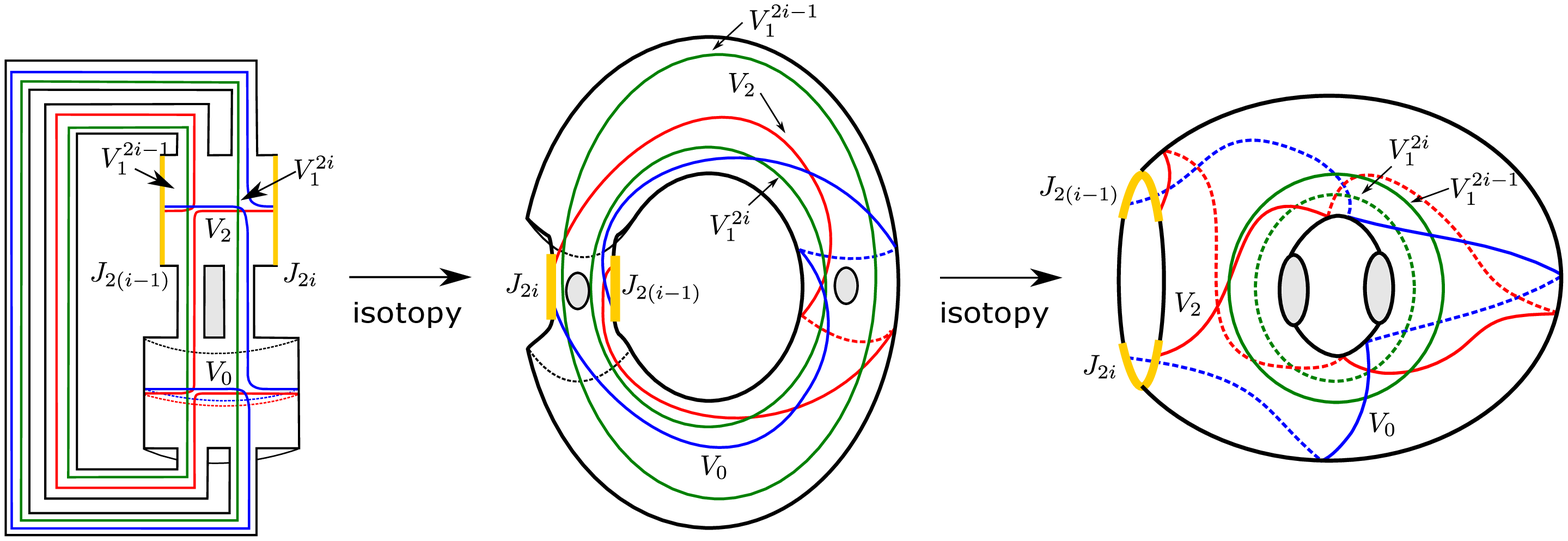}
                        \caption{Isotoping the vanishing cycles on $F_g^i$}
                        \label{fig: isotopy}
                    \end{center}
    \end{figure}
  \begin{figure}[h]
                    \begin{center}
                        \includegraphics[width=460pt]{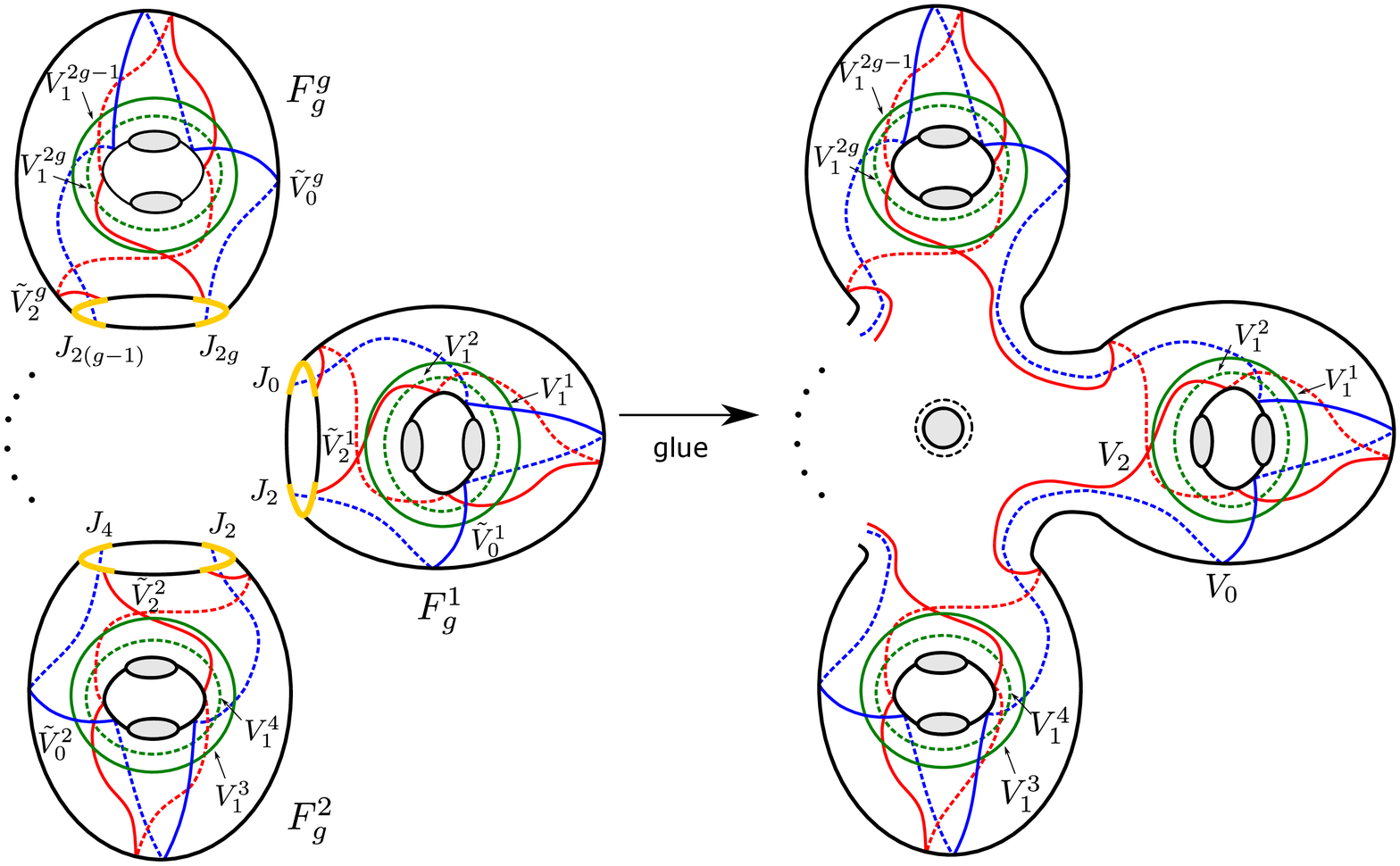}
                        \caption{The vanishing cycles $V_{0}, V_{1}^{1}, \dots, V_{1}^{2g},
    V_{2}$ on the fiber $F_g$}
                        \label{fig: vanishing cycle}
                    \end{center}
    \end{figure}

    \begin{theorem}\label{thm: ope}
Let $V_{0}, V_{1}^{1}, \dots, V_{1}^{2g},
    V_{2}$ be the simple closed
    curves  shown on the surface $\Sigma_{g, 2g+2} \cong F_g$ depicted on the right in Figure~\ref{fig: vanishing cycle} and
    let $$\varphi_g := D(V_{0})  D(V_{1}^{1})  \cdots  D(V_{1}^{2g})
D(V_{2}) \in Map(\Sigma_{g,2g+2}, \del \Sigma_{g,2g+2}).$$ Then, for
all $g \geq 1$,
    the open book $(\Sigma_{g,2g+2}, \varphi_g)$ is adapted to $(ST^{*}\Sigma_{g}, \xi_{can})$.
    \end{theorem}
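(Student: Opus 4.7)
The plan is to assemble what has been established in the preceding sections. By Johns' construction, there is an exact symplectic Lefschetz fibration $\pi_g : (W_g, \omega_g) \to D^2$ whose total space, after smoothing its codimension $2$ corners, is conformally exact symplectomorphic to $(DT^* \Sigma_g, \omega_{can})$. In particular, the contact $3$-manifold $(\partial W_g, \ker(\alpha_g|_{\partial W_g}))$ induced on its boundary is contactomorphic to the canonical contact unit cotangent bundle $(ST^* \Sigma_g, \xi_{can})$.

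By the general discussion at the end of Section~\ref{sec: eslf}, the fibration $\pi_g$ equips $\partial W_g$ with an open book, adapted to this contact structure, whose page is the regular fiber $F_g$ and whose monodromy is the ordered product of positive Dehn twists along the vanishing cycles associated with a distinguished basis of vanishing paths. The proof therefore reduces to identifying $F_g$ with $\Sigma_{g,2g+2}$ and the ordered collection of Johns' vanishing cycles with the curves $(V_0, V_1^1, \ldots, V_1^{2g}, V_2)$ pictured on the right in Figure~\ref{fig: vanishing cycle}, in such a way that the resulting monodromy equals $\varphi_g$.

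This identification is precisely what has been carried out in the paragraphs immediately preceding the theorem. Starting from Johns' original model of $F_g$ as the rectangle $R_g$ with boundary identifications (Figure~\ref{fig: Rg}), one regroups the rectangles into the building blocks $F_g^i$, performs the handle slide of Figure~\ref{fig: identification}(c), and applies the isotopies of Figure~\ref{fig: isotopy}. An Euler characteristic count confirms that $F_g \cong \Sigma_{g,2g+2}$, and since each step is an ambient isotopy or a handle slide that carries every vanishing cycle to its counterpart in the target picture, the ordered collection of Johns' vanishing cycles is carried onto $(V_0, V_1^1, \ldots, V_1^{2g}, V_2)$ up to isotopy. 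Hence the monodromy of $\pi_g$ equals $\varphi_g = D(V_0) D(V_1^1) \cdots D(V_1^{2g}) D(V_2)$ in $Map(\Sigma_{g,2g+2}, \partial \Sigma_{g,2g+2})$.

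The only nontrivial step is the pictorial bookkeeping in the previous paragraph: one must verify carefully that the sequence of handle slides and isotopies indicated in Figures~\ref{fig: identification} and~\ref{fig: isotopy} preserves not only each individual vanishing cycle up to isotopy, but also their relative arrangement on each $F_g^i$ and the cyclic assembly into the global picture of Figure~\ref{fig: vanishing cycle}. Once this is done, the conclusion that $(\Sigma_{g,2g+2}, \varphi_g)$ is an open book adapted to $(ST^* \Sigma_g, \xi_{can})$ for every $g \geq 1$ is immediate.
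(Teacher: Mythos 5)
Your proposal is correct and follows essentially the same route as the paper's own proof: the open book induced on $\partial W_g$ by Johns' exact symplectic Lefschetz fibration is adapted to the boundary contact structure, which is contactomorphic to $(ST^*\Sigma_g,\xi_{can})$ by the conformal exact symplectomorphism $(W_g,\omega_g)\cong(DT^*\Sigma_g,\omega_{can})$, and the identification of the page and vanishing cycles with $(\Sigma_{g,2g+2};V_0,V_1^1,\dots,V_1^{2g},V_2)$ is exactly the pictorial work carried out in the paragraphs preceding the theorem.
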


    \begin{proof}
    We first note that the open book $(\Sigma_{g, 2g+2}, \varphi_g )$ induced on $\partial W_g$ by
    $$\pi_g: (W_g, \omega_g=d \alpha_g) \rightarrow
    D^{2}$$ is
    adapted to the contact $3$-manifold $(\del W_g, \ker (\alpha_{g}|_{\partial W_{g}}))$.
    According to \cite[Theorem 1.1]{Jo},
    $(W_g, \omega_g)$ is conformally exact symplectomorphic to $(DT^{*}\Sigma_{g}, \omega_{can})$, which
    is a strong symplectic filling of $(ST^{*}\Sigma_{g},
\xi_{can})$. As a consequence, $(\del W_g, \ker (\alpha_g|_{\partial
W_{g}}))$ is contactomorphic to $(ST^{*}\Sigma_{g}, \xi_{can})$, and
hence $(\Sigma_{g, 2g+2}, \varphi_g)$ is adapted to
$(ST^{*}\Sigma_{g}, \xi_{can})$.    \end{proof}

\begin{remark}  Theorem~\ref{thm: ope} also holds for $g=0$ case.  Note that $F_0 = \Sigma_{0,2}$ is nothing but  an annulus.
In this case, $V_0=V_2$ is the core circle of this annulus, and
there is no $V_1^j$.  Therefore $(ST^{*}S^2=\mathbb{RP}^3,
\xi_{can})$ has an adapted open book whose page is an annulus and
whose monodromy is the square of the positive Dehn twist along the
core circle of the annulus.\end{remark}

\subsubsection{Another open book decomposition}

In this section, we describe another open book decomposition of
$ST^*\Sigma_g$ supporting $\xi_{can}$. To motivate our discussion,
we digress here to review some open book of $ST^*\Sigma_1 \cong T^3$
supporting $\xi_{can}$ given by Van Horn-Morris \cite{vhm}. Our goal
is to compare this open book with the one
 described in Theorem~\ref{thm: ope}, for the case $g=1$. The page
of  the open book described in \cite[Chapter 6]{vhm} is
diffeomorphic to a $4$-holed torus $\Sigma_{1,4}$ and its monodromy
is given by
$$ \widetilde{\varphi}_1 :=
D^{-2}(a_{1}) D^{-2}(a_{2}) D^{-2}(a_{3})D^{-2}(a_{4})D(\delta_{1})
D(\delta_{2}) D(\delta_{3}) D(\delta_{4}),$$ where $a_{1}, a_{2},
a_{3}, a_4,  \delta_{1}, \delta_{2}, \delta_{3}, \delta_{4}$ are
shown on the $4$-holed torus $\Sigma_{1,4}$ depicted in
Figure~\ref{fig: 4-holdetorus}.

    \begin{figure}[h]
                    \begin{center}
                        \includegraphics[width=200pt]{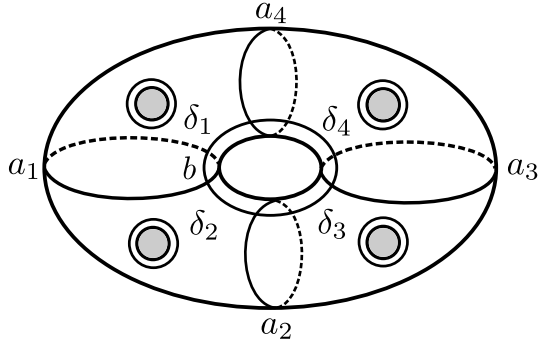}
                        \caption{$4$-holed torus $\Sigma_{1,4}$}
                        \label{fig: 4-holdetorus}
                    \end{center}
    \end{figure}

Using the relation (cf. \cite{ko}) $$D(\delta_{1}) D(\delta_{2})
D(\delta_{3}) D(\delta_{4}) = (D(a_{1})  D(a_{3}) D(b)D(a_{2})
D(a_{4})D(b))^2 \in Map(\Sigma_{1,4}, \del \Sigma_{1,4})$$ and
setting $A_{1,3} := D(a_{1})  D(a_{3})$, $A_{2,4} := D(a_{2})
D(a_{4})$, we see that $\widetilde{\varphi}_1$ is equivalent to a
product of four positive Dehn twists:
\begin{align*} &
\widetilde{\varphi}_1= A_{2,4}^{-1} A_{1,3}^{-1} A_{2,4}^{-1}
A_{1,3}^{-1} A_{1,3}
D(b)A_{2,4} D(b) A_{1,3} D(b)A_{2,4}D(b) \\
& = A_{2,4}^{-1} \big[ A_{1,3}^{-1} \big((A_{2,4}^{-1}D(b)A_{2,4})
D(b)\big) A_{1,3} D(b)\big]A_{2,4}D(b) \\
&=D(A_{2,4}^{-1}  A_{1,3}^{-1} A_{2,4}^{-1}(b)) \; D(A_{2,4}^{-1}
A_{1,3}^{-1} (b))  \; D(A_{2,4}^{-1}(b)) \; D(b)\\
&\equiv D(A_{2,4}^{-1} (b)) \; D(b)  \; D( A_{1,3}(b)) \; D(A_{1,3}
A_{2,4}(b)),
 \end{align*}
 where the notation ``$\equiv$" means ``related by a global
 conjugation". Here we conjugated with the diffeomorphism $A_{1,3}A_{2,4}$, to obtain
 the last line from the previous one. We would like to compare this open book with the one
 described in Theorem~\ref{thm: ope}, for the case $g=1$. The latter has
monodromy
$$\varphi_1 = D(V_{0})  D(V_{1}^{1}) D(V_{1}^{2}) D(V_{2}) \in Map(\Sigma_{1,4}, \del
\Sigma_{1,4}),$$ where the curves $V_{0},  V_{1}^{1},
V_{1}^{2},V_{2}$ are depicted in Figure~\ref{fig: 4-holdetorus2}.

 \begin{figure}[h]
                    \begin{center}
                        \includegraphics[width=200pt]{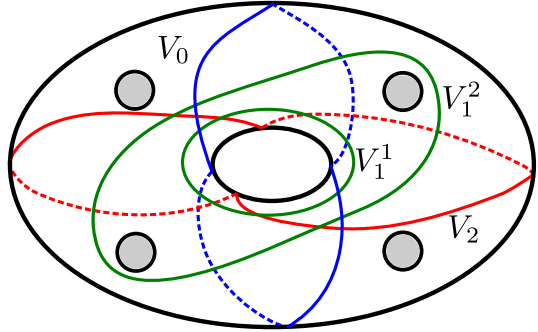}
                        \caption{The vanishing cycles $V_{0},  V_{1}^{1},
V_{1}^{2},V_{2}$ on $\Sigma_{1,4}$}
                        \label{fig: 4-holdetorus2}
                    \end{center}
    \end{figure}

Now one can easily verify that $V_0 = A_{2,4}^{-1}(b)$, $V_1^1 = b$,
$V_1^2 =A_{2,4}^{-1} A_{1,3} (b)$, and $V_2= A_{1,3}(b)$, using our
notation above.

Hence we get
$$\varphi_1 = D(A_{2,4}^{-1}  (b)) \;  D(b) \; D(A_{2,4}^{-1} A_{1,3} (b)) \; D(A_{1,3}(b)),$$
and we claim that $\varphi_1$ and $\widetilde{\varphi}_1$ are
Hurwitz-equivalent. To see this,  we apply a Hurwitz move to
$\widetilde{\varphi}_1$. Namely, we switch the order of the last two
Dehn twists in the factorization of $\widetilde{\varphi}_1$ as
follows:
$$ D(A_{2,4}^{-1} (b)) \; D(b)   \; D(A_{1,3} D(b) A^{-1}_{1,3}   A_{1,3}
A_{2,4}(b)) \; D( A_{1,3}(b)).$$ Here we used the relation
$D(A_{1,3}(b))=A_{1,3} D(b) A^{-1}_{1,3}$. To prove our claim, one
can simply verify that $A_{1,3} D(b) A^{-1}_{1,3} A_{1,3}
A_{2,4}(b)$ is isotopic to $A_{2,4}^{-1} A_{1,3} (b)$ by a direct
calculation on the surface $\Sigma_{1,4}$.

The upshot is that the open books (both of whose page is a $4$-holed
torus) given by monodromies $\varphi_1$ and $\widetilde{\varphi}_1$,
respectively,  are indeed isomorphic. Moreover, there is an exact
symplectic Lefschetz fibration
 $\widetilde{\pi}_1 : (\widetilde{W}_1, \widetilde{\omega}_1) \to D^2$ whose
monodromy is  $\widetilde{\varphi}_1$. Recall that we already
considered an exact symplectic Lefschetz fibration $\pi_1: (W_1,
\omega_1) \rightarrow D^{2}$ whose monodromy is $\varphi_1$, at the
beginning of Section~\ref{sec: exp}. By Lemma~\ref{lem: hur}, we
immediately deduce the following corollary.

\begin{corollary} The exact symplectic Lefschetz fibrations $\pi_1: (W_1, \omega_1) \rightarrow D^{2}$ and
 $\widetilde{\pi}_1 : (\widetilde{W}_1, \widetilde{\omega}_1) \to D^2$ are
isomorphic through an exact symplectomorphism.
\end{corollary}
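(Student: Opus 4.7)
The plan is to invoke Lemma~\ref{lem: hur}, since the preceding discussion has essentially reduced the corollary to a comparison of ordered sets of vanishing cycles on the common fiber $\Sigma_{1,4}$. The strategy is to show that the two ordered sequences of vanishing cycles of $\pi_1$ and $\widetilde\pi_1$, after applying the global conjugation noted in the text and a single elementary Hurwitz move on the last pair, agree up to isotopy on $\Sigma_{1,4}$; the conclusion will then follow by one application of Lemma~\ref{lem: hur}.

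First I would record the two ordered sequences. For $\pi_1$, the sequence $(V_0, V_1^1, V_1^2, V_2)$ has already been identified just above the statement as $(A_{2,4}^{-1}(b),\, b,\, A_{2,4}^{-1}A_{1,3}(b),\, A_{1,3}(b))$. For $\widetilde\pi_1$, after the Matsumoto--Korkmaz-type relation from \cite{ko} and the global conjugation by $A_{1,3}A_{2,4}$ carried out in the preceding paragraph, the monodromy is presented by the sequence $(A_{2,4}^{-1}(b),\, b,\, A_{1,3}(b),\, A_{1,3}A_{2,4}(b))$. Next, I would apply an elementary Hurwitz move to the last two cycles of this sequence and then use $D(\varphi(\delta)) = \varphi D(\delta) \varphi^{-1}$ to rewrite the result in the most convenient form: the third cycle becomes $D(A_{1,3}(b))(A_{1,3}A_{2,4}(b))$ while the fourth becomes $A_{1,3}(b)$. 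The first, second, and fourth entries now match those of $\pi_1$'s sequence literally, so only the third entries must be compared up to isotopy.

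The main obstacle, and really the only computational content of the proof, is verifying the isotopy $D(A_{1,3}(b))(A_{1,3}A_{2,4}(b)) \simeq A_{2,4}^{-1}A_{1,3}(b)$ on $\Sigma_{1,4}$. This is a direct surface-topology calculation on the $4$-holed torus: one draws $b$, the curves $a_1, a_3$ underlying $A_{1,3}$, and the curves $a_2, a_4$ underlying $A_{2,4}$ on $\Sigma_{1,4}$ and tracks how the Dehn twist $D(A_{1,3}(b))$ moves $A_{1,3}A_{2,4}(b)$. Once this isotopy is confirmed, the global conjugation step introduces no additional difficulty: it corresponds only to a change of base point on $\partial D^2$ and does not alter the Lefschetz fibration itself, only its presentation in terms of vanishing cycles. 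Lemma~\ref{lem: hur} then produces the required exact symplectomorphism $\phi : (W_1, \omega_1) \to (\widetilde W_1, \widetilde\omega_1)$, completing the proof.
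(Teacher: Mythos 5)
Your proposal is correct and follows essentially the same route as the paper: identify the two ordered sets of vanishing cycles on $\Sigma_{1,4}$, perform one Hurwitz move on the last pair of twists in $\widetilde{\varphi}_1$, check the single isotopy $D(A_{1,3}(b))(A_{1,3}A_{2,4}(b)) \simeq A_{2,4}^{-1}A_{1,3}(b)$ directly on the surface, note that the global conjugation is harmless, and conclude via Lemma~\ref{lem: hur}. This matches the paper's argument, which likewise leaves that curve isotopy as a direct verification on $\Sigma_{1,4}$ and then deduces the corollary immediately from Lemma~\ref{lem: hur}.
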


In his thesis \cite[Chapter 4]{vhm}, Van Horn-Morris describes
another open book adapted to the contact $3$-manifold
$(ST^*\Sigma_1, \xi_{can})$, whose page is a $3$-holed torus
$\Sigma_{1,3}$ (rather than $4$-holed)  and whose monodromy is given
by
$$ \psi_1 = D(\delta_{1}) D(\delta_{2}) D(\delta_{3}) D^{-3}(a_{1})
D^{-3}(a_{2}) D^{-3}(a_{3}),$$ where $a_{1}, a_{2}, a_{3}, b,
\delta_{1}, \delta_{2}, \delta_{3}$ are shown on the $3$-holed torus
$\Sigma_{1,3}$ depicted in Figure \ref{fig: 3-holedtorus}.

 \begin{figure}[h]
                    \begin{center}
                        \includegraphics[width=150pt]{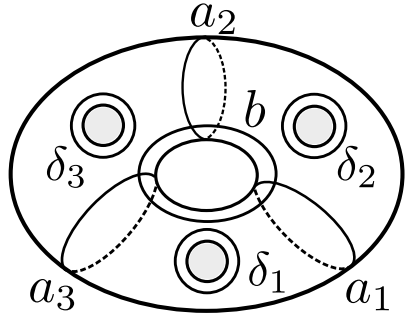}
                        \caption{$3$-holed torus $\Sigma_{1,3}$}
                        \label{fig: 3-holedtorus}
                    \end{center}
    \end{figure}

By using the star relation $D(\delta_{1})  D(\delta_{2})
D(\delta_{3}) = (D(b)  D(a_{1})  D(a_{2})  D(a_{3}))^3$, and setting
    $$T=D(a_{1})  D(a_{2})  D(a_{3}),$$ we have
    \begin{align*}
        &\psi_1= D(\delta_{1})  D(\delta_{2})  D(\delta_{3})  D^{-3} (a_{1}) D^{-3}(a_{2})  D^{-3}(a_{3}) \\
          &= (D(b) T)^3  T^{-3} \\
         &= D(b) TD(b) TD(b) T^{-2}\\
         &\equiv  (T^{-1} D(b) T)  D(b)(T D(b) T^{-1})\\
        &= D(T^{-1}(b)) \; D(b) \; D(T(b)).
    \end{align*}

Hence we see that the monodromy of this open book is equivalent to
the product of three positive Dehn twists. Moreover, since both
$(\Sigma_{1,3}, \psi_1)$ and $(\Sigma_{1,4},
\widetilde{\varphi}_1)$,
 are adapted to the contact $3$-manifold $(ST^*\Sigma_1, \xi_{can})$, they must have a common
positive stabilization. As a matter of fact, one can easily verify
that $(\Sigma_{1,3}, \psi_1)$ stabilized twice and $(\Sigma_{1,4},
\widetilde{\varphi}_1)$ stabilized once are equivalent, using the
lantern relation.


    Motivated by the genus one case, for each $g \geq 1$, we construct an open book adapted to $(ST^{*}\Sigma_{g}, \xi_{can})$ whose page is
    diffeomorphic to $\Sigma_{g, g+2}$, reducing the number of boundary components of the page, compared to that which appeared in
    Theorem~\ref{thm: ope}.
    The key idea is to cut down one boundary component for each building block that we
    used
    above to construct the page $F_g$. To construct this new open book adapted to $(ST^{*}\Sigma_{g},
\xi_{can})$,
    we introduce a new building block, inspired by the genus one case.
    We set $u_{0} := T^{-1}(b)$ and  $\ u_{2}:=T(b)$, as depicted in Figure \ref{fig:
    factorize}.

 \begin{figure}[h]
                    \begin{center}
                        \includegraphics[width=150pt]{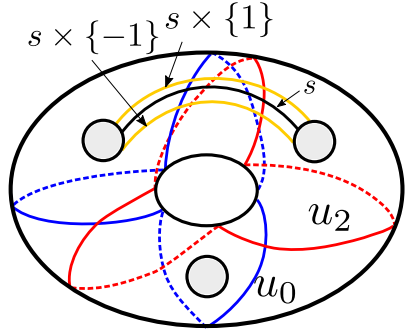}
                        \caption{The curves $u_0$, $u_2$ and the arc $s$ on $\Sigma_{1,3}$}
                        \label{fig: factorize}
                    \end{center}
    \end{figure}

    Let $s$ be the arc whose endpoints lie on two distinct boundary components in a $3$-holed torus $\Sigma_{1,3}$ as shown in Figure \ref{fig: factorize} and
    let $N(s)$ denote a tubular neighborhood of $s$.
    We write $\tilde{\Sigma}$ for the resulting surface, after removing $s\times (-1,1)$ from $\Sigma_{1,3}$, where we
    identify $N(s)$ with $s\times [-1,1]$. We write $\tilde{u}_{0}$ and $\tilde{u}_{2}$ for the two arcs in $\tilde{\Sigma}$
    obtained from the curves $u_{0}$ and $u_{2}$, respectively, by removing their intersection with $N(s)$.

    We take $g$ copies of $\tilde{\Sigma}$ and denote each copy by
    $\tilde{\Sigma}_{j}$, for $j=1,\ldots, g$. We set $$s_{j} \times \{\pm 1\} := s \times \{\pm 1\} \subset
\tilde{\Sigma}_{j},$$
    $$\tilde{u}_{i}^{j} := \tilde{u}_{i} \subset \tilde{\Sigma}_{j} {(i=0,2)}, \;
    \tilde{u}_{1}^{j} := u_{1} \subset \tilde{\Sigma}_{j}, $$
    and put $\tilde{\Sigma}_{1}, \tilde{\Sigma}_{2}, \dots, \tilde{\Sigma}_{g}$ in a circular position
    as depicted on the left in Figure \ref{fig: Xg}. Now, we glue $\tilde{\Sigma}_{j}$ to $\tilde{\Sigma}_{j+1}$ by identifying
     $s_{j}\times \{1\}$ with $s_{j+1}\times \{-1\}$ for
    $j=1,2, \dots, g-1$ and we glue $\tilde{\Sigma}_{g}$ to $\tilde{\Sigma}_{1}$ by identifying
    $s_{g} \times \{ 1 \}$ with $s_{1} \times \{ -1\}$.
    As a consequence, we obtain a surface diffeomorphic to $\Sigma_{g,
    g+2}$, which is depicted on the right in Figure \ref{fig: Xg}.
    Via the identifications above, the union of the arcs
    $\tilde{u}_{0}^{j}$ and $\tilde{u}_{2}^{j}$ form simple closed curves $U_{0}$ and $U_{2}$, respectively, in
    $\Sigma_{g,g+2}$. Considering $\tilde{\Sigma}_{j}$ as a subsurface of $\Sigma_{g,
g+2}$, we denote $u_{1}^{j}$ by $U_{1}^{j}$.

      \begin{figure}[h]
                    \begin{center}
                        \includegraphics[width=450pt]{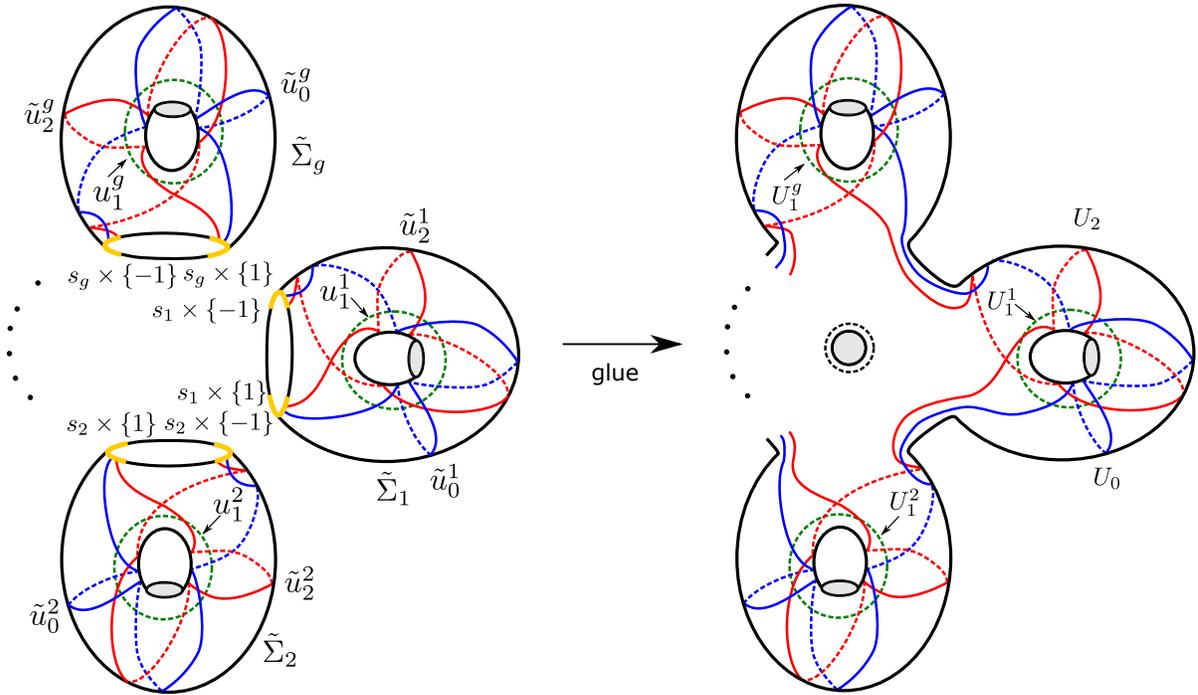}
                        \caption{The curves $U_{0}, U_{1}^{1},  \ldots,  U_{1}^{g}, U_{2}$ on the page $\Sigma_{g, g+2}$}
                        \label{fig: Xg}
                    \end{center}
    \end{figure}

    \begin{theorem}\label{theorem: g+2}  Let $U_{0},  U_{1}^{1},  \ldots,  U_{1}^{g},  U_{2}$ be the simple closed
    curves  shown on $\Sigma_{g, g+2}$  depicted on the right in Figure~\ref{fig: Xg} and let
    $$\psi_g := D(U_{0})  D(U_{1}^{1})  \cdots  D(U_{1}^{g})  D(U_{2}) \in
    Map(\Sigma_{g,g+2}, \del \Sigma_{g, g+2}).$$ Then, for all $g \geq 1$, the open book $(\Sigma_{g, g+2}, \psi_g)$ is adapted to $(ST^{*}\Sigma_{g}, \xi_{can})$.
    \end{theorem}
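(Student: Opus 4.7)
The plan is to deduce the theorem from Theorem~\ref{thm: ope} by exhibiting $(\Sigma_{g, 2g+2}, \varphi_g)$ as the open book obtained from $(\Sigma_{g, g+2}, \psi_g)$ by $g$ positive stabilizations, up to Hurwitz equivalence. Since positive stabilization of the associated Lefschetz fibration preserves the contact structure on the boundary (Lemma~\ref{lem: stab}) and Hurwitz moves preserve the fibration up to exact symplectomorphism (Lemma~\ref{lem: hur}), Theorem~\ref{thm: ope} would then give that $(\Sigma_{g, g+2}, \psi_g)$ is also adapted to $(ST^{*}\Sigma_{g}, \xi_{can})$.

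First, I would use Lemma~\ref{lem: cons} to construct an exact symplectic Lefschetz fibration $\pi'_g : (W'_g, \omega'_g) \to D^2$ with regular fiber $\Sigma_{g,g+2}$ and ordered vanishing cycles $(U_0, U_1^1, \ldots, U_1^g, U_2)$; its induced boundary open book is $(\Sigma_{g, g+2}, \psi_g)$ by construction. Next, in each building block $\tilde{\Sigma}_j$ I would choose a properly embedded arc $\beta_j \subset \tilde{\Sigma}_j$ whose endpoints lie on a single boundary component, so that positive stabilization along $\beta_j$ increases the boundary count by one. Performing this stabilization once per block, for $j = 1, \ldots, g$, produces an exact symplectic Lefschetz fibration $\pi''_g$ whose fiber is diffeomorphic to $\Sigma_{g, 2g+2}$ and whose monodromy is a product of $2g+2$ positive Dehn twists, obtained from the $U$-cycles by inserting the $g$ stabilization cycles $\beta_j'$ at appropriate positions. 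Finally, I would exhibit a sequence of Hurwitz moves that transforms this ordered list on $\Sigma_{g, 2g+2}$ into the Johns list $(V_0, V_1^1, V_1^2, \ldots, V_1^{2g}, V_2)$ pictured in Figure~\ref{fig: vanishing cycle}.

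The main technical obstacle will be the last step: explicitly identifying the stabilization arcs $\beta_j$ and writing down the Hurwitz moves that match the stabilized collection with the Johns collection on $\Sigma_{g, 2g+2}$. This is a local calculation inside each building block, patterned on the genus one verification carried out above, where the star relation rewrites Van Horn-Morris's monodromy as $D(T^{-1}(b))\,D(b)\,D(T(b))$ on $\Sigma_{1,3}$ and explicit Hurwitz moves involving the commuting pairs $A_{1,3}, A_{2,4}$ convert $\widetilde{\varphi}_1$ into $\varphi_1$ on $\Sigma_{1,4}$. For general $g$, the same local model should appear in each of the $g$ blocks, with the cyclically threaded curves $U_0, U_2$ (respectively $V_0, V_2$) providing the consistency that glues the local pictures together. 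Once this matching is in place, $\pi''_g$ agrees, up to exact symplectomorphism, with the Johns Lefschetz fibration $\pi_g$, and Lemmas~\ref{lem: hur} and~\ref{lem: stab}, combined with Theorem~\ref{thm: ope}, yield the conclusion.
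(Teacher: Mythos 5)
Your plan is numerically consistent ($g$ stabilizations along arcs with endpoints on one boundary component take $\Sigma_{g,g+2}$ to $\Sigma_{g,2g+2}$ and add $g$ twists to the $g+2$ in $\psi_g$, matching the $2g+2$ twists in $\varphi_g$), but the entire mathematical content of the theorem is concentrated in the step you defer as ``the main technical obstacle'': identifying the arcs $\beta_j$ and showing that the stabilized factorization of $\psi_g$ is Hurwitz equivalent to the Johns collection $(V_0, V_1^1, \ldots, V_1^{2g}, V_2)$ on $\Sigma_{g,2g+2}$. You assert this as a local calculation modeled on the genus-one discussion, but that discussion does not provide such a model: there the two computations are (a) a Hurwitz equivalence between $\varphi_1$ and $\widetilde{\varphi}_1$ on the \emph{same} page $\Sigma_{1,4}$, and (b) the observation that $(\Sigma_{1,3},\psi_1)$ stabilized \emph{twice} and $(\Sigma_{1,4},\widetilde{\varphi}_1)$ stabilized \emph{once} agree, via the lantern relation. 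Nowhere is $(\Sigma_{1,4},\varphi_1)$ exhibited as a single positive stabilization of $(\Sigma_{1,3},\psi_1)$, and your proposal needs exactly the analogous (stronger) statement for every $g$: that $\varphi_g$ is, up to Hurwitz moves and conjugation, a direct $g$-fold positive stabilization of $\psi_g$. That claim is not established by anything you write, and it is not what the paper proves.

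The paper's proof takes the weaker and more flexible route of a \emph{common} positive stabilization: it stabilizes $(\Sigma_{g,g+2},\psi_g)$ $3g$ times (curves $\alpha_{1,i},\alpha_{3,i},\alpha_{4,i}$) and $(\Sigma_{g,2g+2},\varphi_g)$ $2g$ times (curves $\alpha_{1,i},\eta_i$), lands both on the page $\Sigma_{g,4g+2}$, expresses all relevant curves in terms of a fixed configuration $\alpha_{i,j},\beta_j,\gamma$ there, and then verifies $\widehat{\psi}_g \equiv \widehat{\varphi}_g$ by an explicit chain of Hurwitz moves and cyclic permutations; Giroux's theorem together with Theorem~\ref{thm: ope} then finishes the argument. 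By Giroux correspondence the two open books certainly admit \emph{some} common stabilization once the theorem is known, but whether one is literally a stabilization of the other is precisely the kind of statement that must be checked by hand, and the paper's choice to go up to $\Sigma_{g,4g+2}$ rather than destabilize $\varphi_g$ down to $\psi_g$ is evidence that your shortcut is at best unverified. As written, your argument is a correct reduction strategy (stabilization plus Lemmas~\ref{lem: hur} and~\ref{lem: stab} would indeed suffice, and would even recover Corollary~\ref{cor: elf}), but with its key combinatorial step missing it does not constitute a proof.
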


    \begin{proof}
   We show that $(\Sigma_{g, g+2}, \psi_g)$ and $(\Sigma_{g, 2g+2}, \varphi_g)$ have a common positive
   stabilization. The result follows from a theorem of Giroux \cite{g} coupled with our Theorem~\ref{thm: ope}.
   Let $\alpha_{1,j}, \alpha_{2,j}, \dots, \alpha_{5,j}, \beta_{j}$ (for $j=1,2,\dots, g$), and $\gamma$ be
    the simple closed curves on $\Sigma_{g, 4g+2}$ as shown in Figure \ref{fig:
    stabilize}.

 \begin{figure}[h]
                \begin{center}
                    \includegraphics[width=320pt]{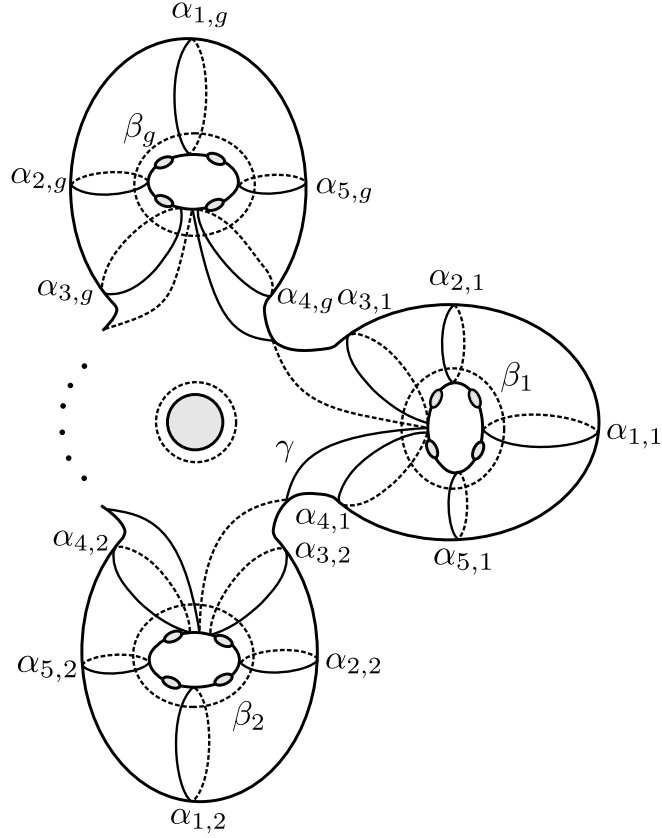}
                \caption{We illustrate a genus $g$ surface
                $\Sigma_{g, 4g+2}$  with $4g+2$ boundary components.
                There are two
                boundary components at the center: one is on top indicated by the solid circle whose interior is shaded;
                the other one is on the back side of the surface
                 indicated by the dashed circle.}
                \label{fig: stabilize}
                \end{center}
    \end{figure}
  \begin{figure}[h]
                    \begin{center}
                        \includegraphics[width=240pt]{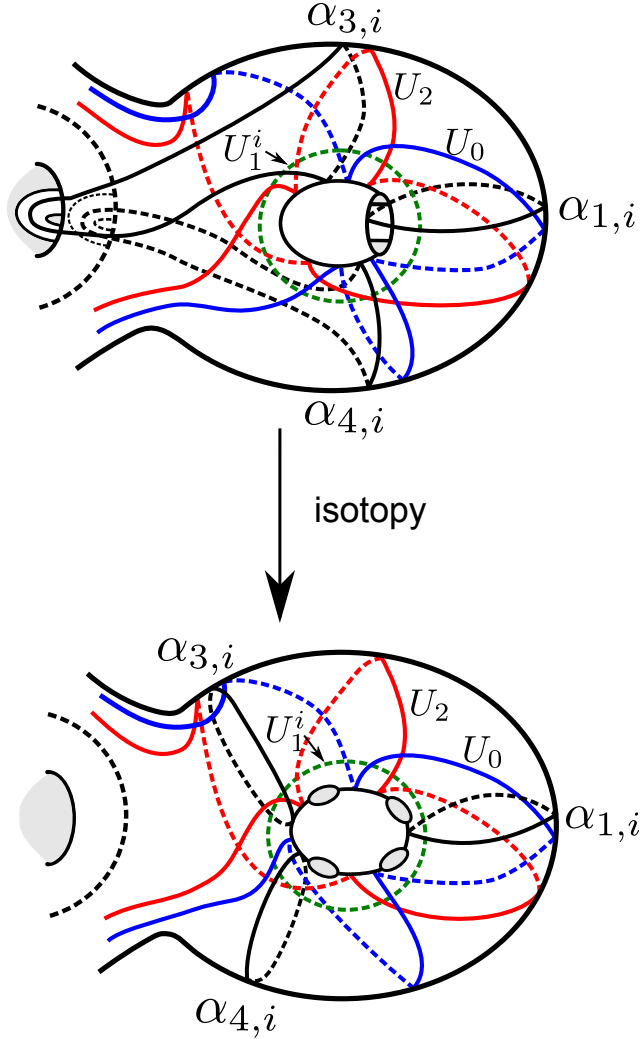}
                        \caption{Each ``building block" in the open book $(\Sigma_{g,g+2}, \psi_g)$ is stabilized three times, where the
                        stabilizing curves $\alpha_{1,i}, \alpha_{3,i}$ and $\alpha_{4,i}$ are pairwise disjoint. To go from top to bottom, we
                        just move the newly created two holes (on the top-left) by an isotopy, along with the curves $\alpha_{3,i}$ and $\alpha_{4,i}$. }
                        \label{fig: stabilize1}
                    \end{center}
    \end{figure}
    In order to prove our claim, we first stabilize the open book $(\Sigma_{g,g+2}, \psi_g)$ $3g$ times as indicated in Figure \ref{fig: stabilize1}.
    Here we just illustrate three stabilizations on each building block, where the stabilizing curves are
    $\alpha_{1,i}, \alpha_{3,i}$ and $\alpha_{4,i}$. The page of the
resulting open book is $\Sigma_{g, 4g+2}$ (identified with the
surface in Figure~\ref{fig: stabilize}) and the monodromy
{$\widehat{\psi_g} \in Map(\Sigma_{g, 4g+2}, \del \Sigma_{g,
4g+2})$} is given by
    \begin{center}
        $\widehat{\psi_g} =\psi_g D(\alpha_{1,1})  \cdots  D(\alpha_{1,g})
        D(\alpha_{3,1})  \cdots  D(\alpha_{3,g})  D(\alpha_{4,1})  \cdots  D(\alpha_{4,g})$,
    \end{center}
    where $\psi_g$ is extended to $\Sigma_{g, 4g+2}$ by identity.

\begin{figure}[h]
                    \begin{center}
                        \includegraphics[width=240pt]{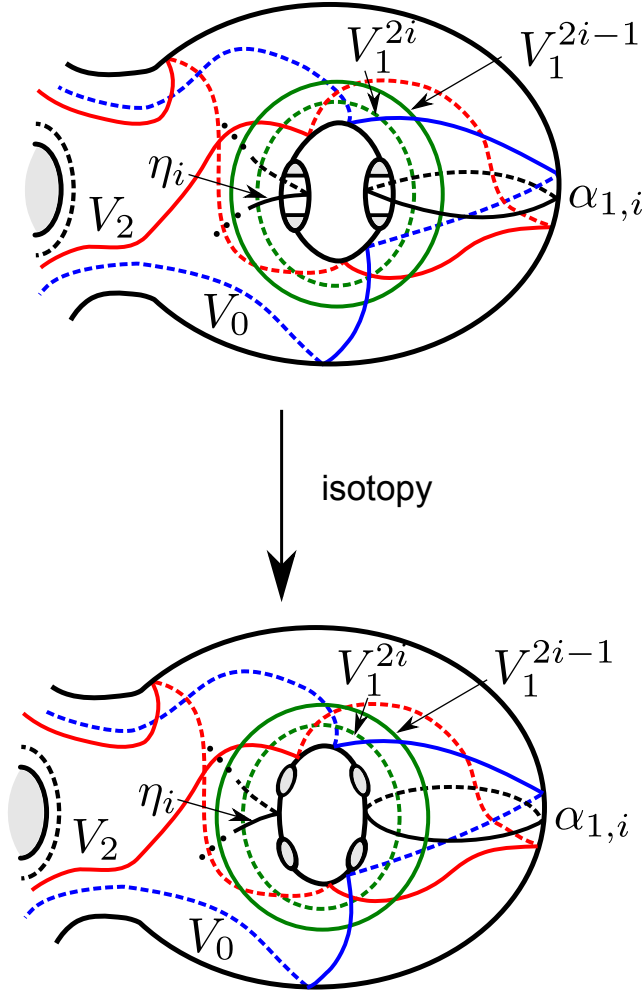}
                        \caption{Each ``building block" in the open book $(\Sigma_{g,2g+2}, \varphi_g)$ is stabilized twice, where the
                        stabilizing curves $\eta_{i} := D^{-1}(V_{2}) D^{-1}(\beta_{i}) (\alpha_{4,i})$ and $\alpha_{1,i}$
                        are disjoint. }
                        \label{fig: stabilize2}
                    \end{center}
    \end{figure}

    Similarly, we stabilize the open book $(\Sigma_{g, 2g+2}, \varphi_g)$
    $2g$
    times as indicated in Figure \ref{fig: stabilize2}. Here we just illustrate two
    stabilizations on each building block, where the stabilizing curves are
    $\alpha_{1,i}$ and $\eta_{i} := D^{-1}(V_{2}) D^{-1}(\beta_{i}) (\alpha_{4,i})$.
    The page of the resulting open book is $\Sigma_{g, 4g+2}$ (identified with the surface in Figure~\ref{fig: stabilize})
    and the monodromy $\widehat{\varphi_g} \in
Map(\Sigma_{g, 4g+2}, \del \Sigma_{g, 4g+2})$ is given by
    \begin{center}
        $\widehat{\varphi}_g =\varphi_g  D(\eta_{1})  \cdots  D(\eta_{g})  D(\alpha_{1,1})  \cdots  D(\alpha_{1,g})$,
    \end{center}
    where $\varphi_g$ is extended to $\Sigma_{g, 4g+2}$ by identity.

    Now we claim that $\widehat{\psi}_g$ and $\widehat{\varphi}_g$ are conjugate.
    First of all, both $\widehat{\psi}_g$ and $\widehat{\varphi}_g$ can be viewed as self-diffeomorphisms of the surface
    $\Sigma_{g, 4g+2}$ shown in Figure~\ref{fig:
stabilize}. In the following, we express the curves involved in the
definitions of $\widehat{\psi}_g$ and $\widehat{\varphi}_g$ in terms
of those depicted in Figure~\ref{fig: stabilize}. For convenience,
we set

$\alpha_{i} := \cup_{j=1}^{g} \alpha_{i,j},\;\;\;\;\beta:=
\cup_{j=1}^{g} \beta_{j}, \;\;\;\; D(\alpha_{i}):= \Pi_{j=1}^{g}
D(\alpha_{i,j}), \;\;\;\;D(\beta):=\Pi_{j=1}^{g} D(\beta_{j}).$

Then, we have

        $U_{0} = D^{-1} (\alpha_{2}) D^{-1} (\alpha_{5})  D(\beta)
        (\gamma), \;\;\;\;\; U_{1}^{j} = \beta_{j}, \;\;\;\;\; U_{2} = D(\alpha_{2})  D(\alpha_{5}) D^{-1} ( \beta) (\gamma),$

$V_{0} = D (\alpha_{4})  D^{-1}(\alpha_{2})  D^{-1} (\alpha_{5})
D(\beta) (\gamma)= D(\alpha_{4}) (U_{0}), \;\;\;\;\; V_{1}^{2j-1} =
D^{-1} (\alpha_{3,j}) D(\alpha_{4,j}) (\beta_{j})$,

$V_{1}^{2j} = \beta_{j},\;\;\;\;\;V_{2} = D^{-1} (\alpha_{3})
D(\alpha_{2})  D(\alpha_{5})  D^{-1} (\beta) (\gamma) =
D^{-1}(\alpha_{3})(U_{2})$.

    In the following argument,  we write $D(\alpha_{i})  D(\beta)  D^{-1} (\alpha_{i})$ for
    $\Pi_{j=1}^{g} (D(\alpha_{i, j})  D(\beta_{j})  D^{-1}(\alpha_{i, j}))$,
    and this is justified by the fact that $D(\alpha_{i, k}) D(\beta_{j}) = D(\beta_{j}) D(\alpha_{i, k})$
     for $k \neq j$.

     Using the notation ``$\equiv$" for ``related by a cyclic permutation
     or a
Hurwitz move", and underlining each pair of Dehn twists where we
perform a Hurwitz move,  we have

    \begin{align}
&\widehat{\psi}_g = \psi_g D(\alpha_{1,1})  \cdots  D(\alpha_{1,g})
        D(\alpha_{3,1})  \cdots  D(\alpha_{3,g})  D(\alpha_{4,1})  \cdots  D(\alpha_{4,g}) \notag\\
        &= D(U_{0})  D(U_{1}^{1})  \cdots  D(U_{1}^{g})  D(U_{2}) \underline{D(\alpha_{1})
        D(\alpha_{3})}  {D(\alpha_{4})} \notag\\
        &\equiv
D( D^{-1}(\alpha_{2}) D(\alpha_{5})  D(\beta) (\gamma))  D(\beta)
D(D(\alpha_{2})
        D(\alpha_{5})  D^{-1}(\beta)(\gamma))  D(\alpha_{3})
        D(\alpha_{1})  {D(\alpha_{4})} \notag\\
        &\equiv
        \underline{D(\alpha_{4})
        D(D^{-1}(\alpha_{2})  D(\alpha_{5})  D(\beta) (\gamma))}
        D(\beta)
        \underline{D( D^{-1}(\alpha_{2}) D(\alpha_{5})  D^{-1}(\beta)(\gamma))  D(\alpha_{3})}
        D(\alpha_{1}) \notag\\
        &\equiv
         D(D(\alpha_{4})  D^{-1}(\alpha_{2}) D(\alpha_{5})  D(\beta) (\gamma))
        D(\alpha_{4})
        D(\beta)
        D(\alpha_{3})
        D(D^{-1}(\alpha_{3}) D(\alpha_{2})  D(\alpha_{5})  D^{-1}(\beta)(\gamma))
        D(\alpha_{1}) \notag\\
        &=
        D(V_{0})  \underline{D(\alpha_{4})  D(\beta) } D(\alpha_{3})  D(V_{2})  D(\alpha_{1})\notag \\
        &\equiv
        D(V_{0})  D(D(\alpha_{4})(\beta))  \underline{D(\alpha_{4})  D(\alpha_{3}) }   D(V_{2})  D(\alpha_{1}) \notag\\
        &\equiv
        D(V_{0})  \underline{ D(D(\alpha_{4})(\beta))  D(\alpha_{3})}  D(\alpha_{4})    D(V_{2})  D(\alpha_{1}) \notag\\
        &\equiv
        D(V_{0})  D(\alpha_{3})  D(D^{-1}(\alpha_{3}) D(\alpha_{4}) (\beta))  D(\alpha_{4})  D(V_{2})  D(\alpha_{1})\notag \\
        &\equiv
        D(V_{0})  \underline{D(\alpha_{3})  \{ \Pi_{j=1}^{g} D(V_{1}^{2j-1}) \} }  D(\alpha_{4})  D(V_{2})  D(\alpha_{1})\notag \\
        &\equiv
        D(V_{0})   \{ \Pi_{j=1}^{g} D(V_{1}^{2j-1}) \}
        D(\{ \Pi_{j=1}^{g} D^{-1}(V_{1}^{2j-1}) \} (\alpha_{3})) D(\alpha_{4})  D(V_{2})  D(\alpha_{1}) \notag
\end{align}

Using the fact that $D^{-1}(V_{1}^{2j-1})(\alpha_{3, j})$ and
$D(\alpha_{4, j}) (\beta_{j})$ are isotopic, we continue the
sequence of equivalences above as
           \begin{align}
          &=
        D(V_{0})  \{ \Pi_{j=1}^{g} D(V_{1}^{2j-1})\}  \underline{D(D(\alpha_{4}) (\beta))  D(\alpha_{4})}  D(V_{2})
        D(\alpha_{1}) \notag\\
        &\equiv
        D(V_{0})  \{\Pi_{j=1}^{g} D(V_{1}^{2j-1})\}  \underline{D(\alpha_{4})  D(\beta)}  D(V_{2})  D(\alpha_{1}) \notag\\
        &\equiv
        D(V_{0})  \{\Pi_{j=1}^{g} D(V_{1}^{2j-1})\}  D(\beta)  \underline{D(D^{-1}(\beta)(\alpha_{4}))  D(V_{2})}  D(\alpha_{1}) \notag\\
        &\equiv
        D(V_{0}) \{ \Pi_{j=1}^{g} D(V_{1}^{2j-1})\} \{ \Pi_{j=1}^{g} D(V_{1}^{2j})\}  D(V_{2})  D(D^{-1}(V_{2}) D^{-1}(\beta)(\alpha_{4}))  D(\alpha_{1}) \notag\\
        &\equiv
        D(V_{0})  \{\Pi_{j=1}^{g} D(V_{1}^{2j-1})\} \{ \Pi_{j=1}^{g} D(V_{1}^{2j})\}  D(V_{2}) \{ \Pi_{j=1}^{g} D(\eta_{j}) \} D(\alpha_{1}) \notag\\
&=D(V_{0})  \{\Pi_{j=1}^{g} D(V_{1}^{2j-1})\} \{ \Pi_{j=1}^{g} D(V_{1}^{2j})\}  D(V_{2}) \{ \Pi_{j=1}^{g} D(\eta_{j}) \} D(\alpha_{1}) \notag\\
        &=  \varphi_g  D(\eta_{1})  \cdots  D(\eta_{g})  D(\alpha_{1,1})  \cdots  D(\alpha_{1,g}) \notag\\
&=  \widehat{\varphi}_g. \notag
    \end{align}

    Since a cyclic permutation is equivalent to a global conjugation of the monodromy, and
    a Hurwitz move does not affect the monodromy,
    we conclude that $\widehat{\psi}_g = \widehat{\varphi}_g$ up to conjugation.
    Therefore, the open books $(\Sigma_{g, g+2}, {\psi}_g)$ and $(\Sigma_{g, 2g+2}, {\varphi_g})$ have a common positive stabilization.\end{proof}

    \begin{corollary} \label{cor: elf}
     Let $\pi_g': (W_g', \omega_g') \rightarrow D^{2}$ denote the exact symplectic Lefschetz fibration, whose regular fiber is $\Sigma_{g, g+2}$ and
    whose monodromy is  $$\psi_g=D(U_{0}) D(U_{1}^{1}) \cdots D(U_{1}^{g}) D(U_{2}).$$
    Then, for all $g \geq 1$, the completion of $(W_g', \omega_g')$ is symplectomorphic to the completion
    of $(DT^{*}\Sigma_{g}, \omega_{can})$. In particular,
    $W_g'$ is diffeomorphic to $DT^{*}\Sigma_{g}$.
\end{corollary}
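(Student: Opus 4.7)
The plan is to upgrade the common positive stabilization argument from the proof of Theorem~\ref{theorem: g+2}---which takes place at the level of factorizations in the mapping class group---to the level of exact symplectic Lefschetz fibrations, using the machinery developed in Section~\ref{sec: eslf}.

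First, I would invoke Lemma~\ref{lem: cons} to build $\pi_g' : (W_g', \omega_g') \to D^2$ as a genuine exact symplectic Lefschetz fibration whose ordered vanishing cycles on $\Sigma_{g,g+2}$ are exactly $U_0, U_1^1, \dots, U_1^g, U_2$. Then, mimicking the $3g$ stabilizations in Figure~\ref{fig: stabilize1}, I would perform $3g$ positive stabilizations of $\pi_g'$ along properly embedded arcs in $\Sigma_{g,g+2}$ (three per building block, picked so that after stabilization the page becomes $\Sigma_{g,4g+2}$ and the vanishing cycles include $\alpha_{1,i}, \alpha_{3,i}, \alpha_{4,i}$). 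By Lemma~\ref{lem: stab}, the result is an exact symplectic Lefschetz fibration $\widehat{\pi}_g' : (\widehat{W}_g', \widehat{\omega}_g') \to D^2$ with monodromy $\widehat{\psi}_g$ whose completion is symplectomorphic to the completion of $(W_g', \omega_g')$.

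In parallel, starting from Johns' fibration $\pi_g : (W_g, \omega_g) \to D^2$ (whose completion is symplectomorphic to the completion of $(DT^*\Sigma_g, \omega_{can})$ by \cite[Theorem 1.1]{Jo}), I would perform the $2g$ stabilizations indicated in Figure~\ref{fig: stabilize2} along the arcs realizing $\alpha_{1,i}$ and $\eta_i$ as the new vanishing cycles. A second application of Lemma~\ref{lem: stab} produces an exact symplectic Lefschetz fibration $\widehat{\pi}_g : (\widehat{W}_g, \widehat{\omega}_g) \to D^2$ with monodromy $\widehat{\varphi}_g$, again with completion symplectomorphic to the completion of $(DT^*\Sigma_g, \omega_{can})$.

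Finally, the explicit sequence of Hurwitz moves and cyclic permutations displayed in the proof of Theorem~\ref{theorem: g+2} converts $\widehat{\psi}_g$ into $\widehat{\varphi}_g$. Each Hurwitz move lifts, via Lemma~\ref{lem: hur}, to an exact symplectomorphism between the associated total spaces, and each cyclic permutation corresponds to sliding the base point around $\partial D^2$, which---as described in Section~\ref{sec: eslf}---conjugates the monodromy by an element of $\mathrm{Map}(F, \partial F)$ realized via parallel transport and hence also lifts to an exact symplectomorphism. Chaining the three equivalences (stabilization of $\pi_g'$, isomorphism $\widehat{\pi}_g' \cong \widehat{\pi}_g$, stabilization of $\pi_g$) then yields a symplectomorphism between the completions of $(W_g', \omega_g')$ and $(DT^*\Sigma_g, \omega_{can})$, and the claimed diffeomorphism $W_g' \cong DT^*\Sigma_g$ is immediate. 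The main technical point to verify carefully is the first step: one must check that the stabilization curves drawn in Figures~\ref{fig: stabilize1} and \ref{fig: stabilize2} are realized by properly embedded arcs in the respective pages so that Lemma~\ref{lem: stab} applies (Lagrangianity is automatic on a surface), and that the sequence of Hurwitz moves performed in the factorization identity of Theorem~\ref{theorem: g+2} is compatible with the ordering of vanishing cycles produced by the stabilization procedure.
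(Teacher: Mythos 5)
Your proposal is correct and follows essentially the same route as the paper: the paper's (much terser) proof likewise observes that, by the proof of Theorem~\ref{theorem: g+2}, $\pi_g$ and $\pi_g'$ admit a common positive stabilization up to Hurwitz moves and global conjugations, and then combines Lemma~\ref{lem: hur} and Lemma~\ref{lem: stab} (with the remark that a global conjugation induces an isomorphism of exact symplectic Lefschetz fibrations through a symplectomorphism of total spaces) exactly as you do. Your write-up simply makes explicit the intermediate fibrations $\widehat{\pi}_g'$ and $\widehat{\pi}_g$ and the technical checks the paper leaves implicit.
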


\begin{proof} By the proof of Theorem~\ref{theorem: g+2}, we see that $\pi_g$ (defined at the beginning of Section~\ref{sec: exp})
and $\pi_g'$ have a common positive stabilization, up to Hurwitz
moves and global conjugations. Note that a  global conjugation
induces an isomorphism of exact symplectic Lefschetz fibrations
through a symplectomorphism of their total spaces.  Therefore, the
statement follows by combining Lemma~\ref{lem: hur} and
Lemma~\ref{lem: stab}.\end{proof}

As mentioned in Section \ref{sec: intro}, according to Wendl
\cite{w}, any minimal strong symplectic filling of
$(ST^{*}\Sigma_{1} \cong T^3, \xi_{can})$ is symplectic deformation
equivalent to $(DT^{*}\Sigma_{1} \cong T^{2} \times D^2,
\omega_{can})$. Therefore, we would like to finish this section with
the following question.

\begin{question}\label{ques: sydef} Is it true that $(W_{g}', \omega_{g}')$ in Corollary \ref{cor:
elf} is symplectic deformation equivalent to $(DT^{*}\Sigma_{g},
\omega_{can})$, for all $g\geq2$?\end{question}

It is plausible that the answer to Question~\ref{ques: sydef} is
positive, via a Liouville type flow as in \cite{Jo}, although we
could not verify it.

\subsection{Unit contact cotangent bundles of non-orientable surfaces}\label{sec: nori}
In this section, we assume that $S$ is the closed
\emph{non-orientable} surface  obtained by the connected sum of $k$
copies of $\mathbb{RP}^2$, which we denote by $N_k$. We also denote
the exact symplectic Lefschetz fibration of Johns discussed above by
$\pi_k: (M_k, \omega_k) \rightarrow D^{2}$, where $(M_k, \omega_k)$
is
    conformally exact symplectomorphic to $(DT^{*}N_{k},
    \omega_{can})$. We first review the Lefschetz fibration $\pi_k$
    by describing its fiber and a set of vanishing cycles.

 The
fiber $F_{k}$ (see Figure \ref{fig: F_{k}}) is constructed as
follows: Let $R_{k}$ denote the rectangle $[0, k] \times [-1,1]$ in
$\R^{2}$ equipped with the standard orientation.

\begin{figure}[h]
                    \begin{center}
                        \includegraphics[width=250pt]{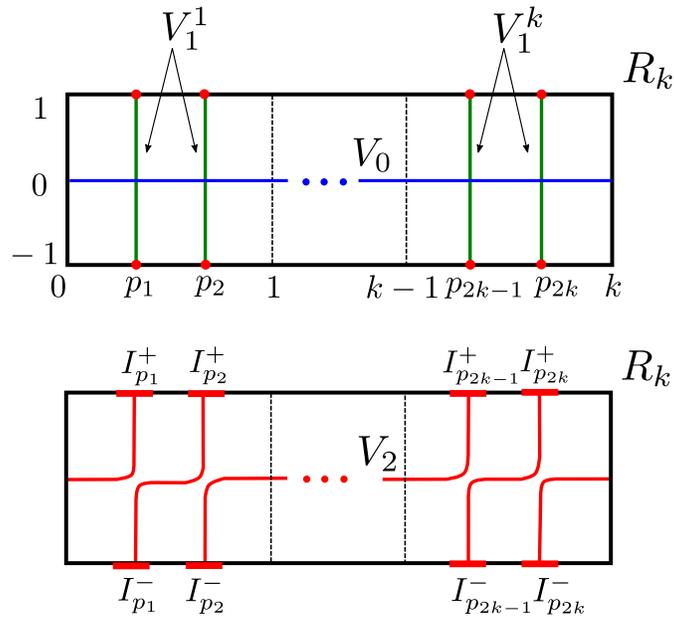}
                        \caption{The vanishing cycles $V_{0}, V_{1}^{1}, \dots, V_{1}^{k}, V_{2}$}
                        \label{fig: F_{k}}
                    \end{center}
   \end{figure}

We fix the points
$$p_{2i-1}:= (i-1) + 1/3, \  p_{2i}:= (i-1)+2/3, $$
for $i=1,2,\dots, k$ on the $x$-axis.
For a sufficiently small $\varepsilon>0$, set
$$I_{p_{j}}^{\pm}:= [p_{j}-\varepsilon, p_{j}+ \varepsilon] \times \{ \pm 1\} \subset \R^{2},$$
for $j=1,2,\dots, 2k$. We first identify $\{0\} \times [-1,1]$ with
$\{k\} \times [-1,1]$ to obtain an annulus. Next, we identify
$I_{p_{2j-1}}^{+}$ with $I_{p_{2j}}^{+}$, and $I_{p_{2j-1}}^{-}$
with $I_{p_{2j}}^{-}$ for $j=1,2,\dots, k$

 Note that, for each $j=1,2, \ldots,
    k$, these identifications can be viewed as attaching two
    $1$-handles, which is the same as plumbing an annulus as shown
    on the right in Figure~\ref{fig: plumbing}.

    It is clear (see Figure \ref{fig: vanishing cycles of pi_{k}}) that
the resulting oriented surface is a planar surface with $2k+2$
boundary components.

\begin{figure}[h]
                    \begin{center}
                        \includegraphics[width=230pt]{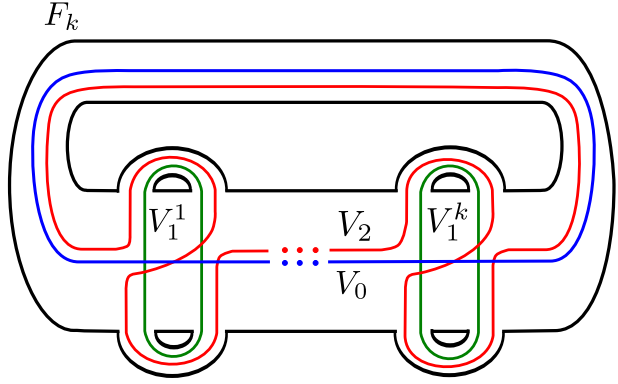}
                        \caption{The vanishing cycles $V_{0}, V_{1}^{1}, \dots, V_{1}^{k}, V_{2}$ on the fiber
                        $F_{k}$}
                        \label{fig: vanishing cycles of pi_{k}}
                    \end{center}
    \end{figure}

Now we describe the vanishing cycles $V_{0}, V_{1}^{1},
\dots, V_{1}^{k}, V_{2}$ of $\pi_{k}$ for a fixed distinguished
basis of vanishing paths. The vanishing cycle $V_{0}$ is the simple
closed curve in $F_{k}$ obtained from $[0, k] \times \{0\} \subset
R_{k}$ through the above identifications. Similarly, the simple
closed curve $V_{1}^{j} \subset F_{k}$ is obtained from
$\{p_{2j-1}\} \times [-1,1] \cup \{ p_{2j}\} \times [-1,1] \subset
R_{k}$. Equivalently, $V_{1}^{j}$ is the core circle of the
     annulus that appears in the plumbing description (see Figure~\ref{fig: vanishing cycles of pi_{k}}). The last vanishing cycle $V_{2}$ is the simple closed curve in
$F_{k}$ obtained from the Lagrangian surgery of $V_{0}$ and
$\cup_{i=1}^{k} V_{1}^{k}$.

The following theorem is proved by the same argument we used to
prove Theorem~\ref{thm: ope}.

\begin{theorem}\label{thm: ope2}
Let $V_{0}, V_{1}^{1}, \dots, V_{1}^{k}, V_{2}$ be the simple closed curves shown on the surface $\Sigma_{0, 2k+2} \cong F_{k}$
depicted in Figure \ref{fig: vanishing cycles of pi_{k}} and let
$$\phi_{k}:= D(V_{0}) D(V_{1}^{1})  \cdots D(V_{1}^{k}) D(V_{2}) \in Map(\Sigma_{0, 2k+2}, \partial \Sigma_{0, 2k+2}).$$
Then, for all $k \geq 1$, the open book $(\Sigma_{0, 2k+2},
\phi_{k})$ is adapted to $(ST^{*}N_{k}, \xi_{can})$.
\end{theorem}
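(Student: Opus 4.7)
The plan is to mirror, essentially verbatim, the short argument used to prove Theorem~\ref{thm: ope}, with $\Sigma_g$ replaced by $N_k$ and $(W_g, \omega_g)$ replaced by Johns' exact symplectic Lefschetz fibration $(M_k, \omega_k) \to D^2$ for the non-orientable surface $N_k$. The content of Theorem~\ref{thm: ope2} is, after all, not an independent geometric computation but rather a repackaging of Johns' construction together with the general principle (from Section~\ref{sec: eslf}) that an exact symplectic Lefschetz fibration induces an adapted open book on its contact boundary whose page is the regular fiber and whose monodromy is the ordered product of positive Dehn twists along the vanishing cycles.

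First, I would note that the regular fiber $F_k$ of $\pi_k : (M_k, \omega_k) \to D^2$ was shown above to be diffeomorphic to the planar surface $\Sigma_{0,2k+2}$, and that under a fixed distinguished basis of vanishing paths the vanishing cycles of $\pi_k$ are exactly the simple closed curves $V_0, V_1^1,\dots,V_1^k, V_2$ drawn in Figure~\ref{fig: vanishing cycles of pi_{k}}. Invoking the discussion following Definition~\ref{def:exactsym}, the boundary $\partial M_k$ (after rounding off the corners) carries a natural open book decomposition $(\Sigma_{0,2k+2}, \phi_k)$, where
\[
\phi_k = D(V_0)\,D(V_1^1)\cdots D(V_1^k)\,D(V_2),
\]
and this open book supports the contact structure $\ker(\alpha_k|_{\partial M_k})$, where $\omega_k = d\alpha_k$.

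Next, I would cite Johns' result \cite[Theorem~1.1]{Jo}, which asserts that $(M_k, \omega_k)$ is conformally exact symplectomorphic to $(DT^*N_k, \omega_{can})$. Since a conformal exact symplectomorphism is in particular a diffeomorphism restricting to a contactomorphism of boundaries (the Liouville forms differ only by a positive constant and an exact $1$-form, both of which preserve the kernel on the boundary), it follows that $(\partial M_k, \ker(\alpha_k|_{\partial M_k}))$ is contactomorphic to $(ST^*N_k, \xi_{can})$. Combining this with the previous paragraph, the open book $(\Sigma_{0,2k+2}, \phi_k)$ is adapted to $(ST^*N_k, \xi_{can})$, as claimed.

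There is really no serious obstacle; the only thing to be careful about is the verification that the fiber and vanishing cycles of Johns' fibration are indeed as described in Figure~\ref{fig: vanishing cycles of pi_{k}}. That verification is essentially the non-orientable analogue of the discussion carried out for the $F_g$ picture in Section~\ref{sec: ori}, except that now every $1$-handle attachment corresponds to the \emph{non-orientable} plumbing (the right-hand picture in Figure~\ref{fig: plumbing}), because each generator in the standard handle decomposition of $N_k$ comes from a non-orientable $1$-handle. Since this identification has already been made explicit in the paragraphs preceding the theorem statement, the proof itself reduces to citing Johns' theorem and the general open book/Lefschetz fibration correspondence, exactly as in the proof of Theorem~\ref{thm: ope}.
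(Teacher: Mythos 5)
Your proposal is correct and coincides with the paper's own argument: the paper proves Theorem~\ref{thm: ope2} precisely by repeating the proof of Theorem~\ref{thm: ope}, i.e.\ the open book induced on $\partial M_k$ by Johns' exact symplectic Lefschetz fibration has page $F_k\cong\Sigma_{0,2k+2}$ and monodromy $\phi_k$, and Johns' theorem identifies $(M_k,\omega_k)$ with $(DT^*N_k,\omega_{can})$, so the boundary contact structure is $\xi_{can}$. No substantive difference from the paper's route.
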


\begin{remark} Note that $F_1=\Sigma_{0,4}$  is the $4$-holed sphere and the
monodromy $$\phi_1=D(V_{0}) D(V_{1}^{1}) D(V_{2})$$ of the open book
given in Theorem~\ref{thm: ope2} on $ST^*\mathbb{RP}^2 = L(4,1)$  is
equal, by the lantern relation, to the product of positive Dehn
        twists along four curves each of which is parallel to a boundary component of $\Sigma_{0,4}$.\end{remark}

\section*{Appendix: Diffeomorphism types of the total spaces of the Lefschetz
fibrations} In this appendix, we verify that the total spaces of the
Lefschetz fibrations $\pi_{g}: W_{g} \to D^2$  (see
Section~\ref{sec: ori}) and $\pi_{k}: M_{k} \to D^2$ (see
Section~\ref{sec: nori}) are diffeomorphic to $DT^{*}\Sigma_{g}$ and
$DT^{*}N_{k}$, respectively.

\subsection{Orientable case}

  We show that, for each $g \geq 1$, the $4$-manifold $W_{g}$ is diffeomorphic to  $DT^{*}\Sigma_{g}$ using
  Kirby calculus.
   There is a handle decomposition of the fiber $F_{g}$, after isotopy, as shown in Figure \ref{fig:
   handle_orientable}.

\begin{figure}[h]
                    \begin{center}
                        \includegraphics[width=240pt]{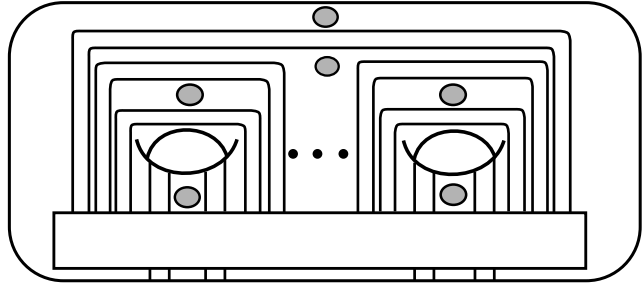}
                        \caption{Handle decomposition of $F_{g}$: The large rectangle represents the $0$-handle and each band represents a $1$-handles.}
                        \label{fig: handle_orientable}
                    \end{center}
    \end{figure}
\begin{figure}[h]
                    \begin{center}
                        \includegraphics[width=337pt]{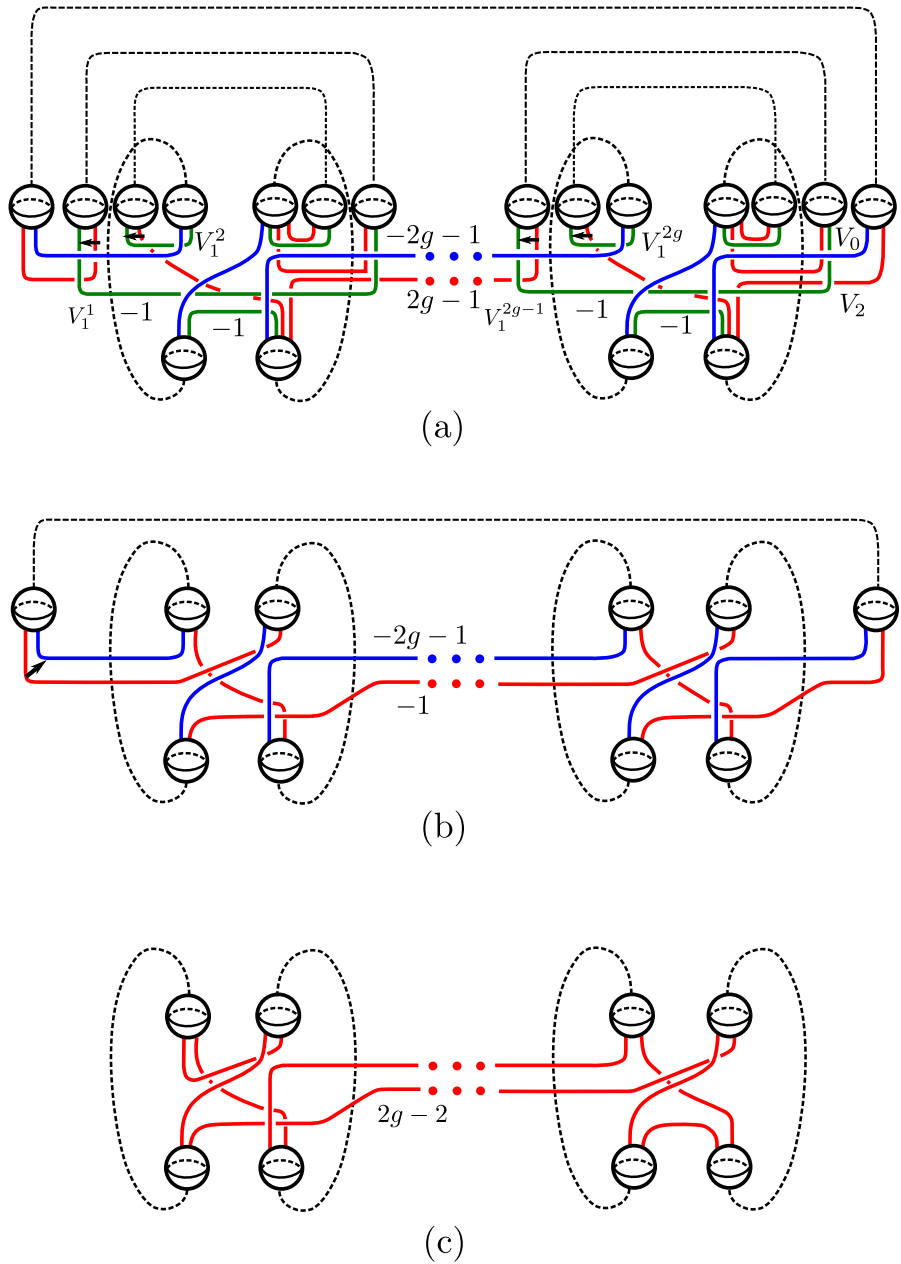}
                        \caption{Kirby diagram of $W_{g}$ and Kirby calculus: Each small arrow in the diagram indicates how we slide a $2$-handle over another one.}
                        \label{fig: Kirby_orientable}
                    \end{center}
    \end{figure}

                \begin{figure}[h]
                    \begin{center}
                        \includegraphics[width=253pt]{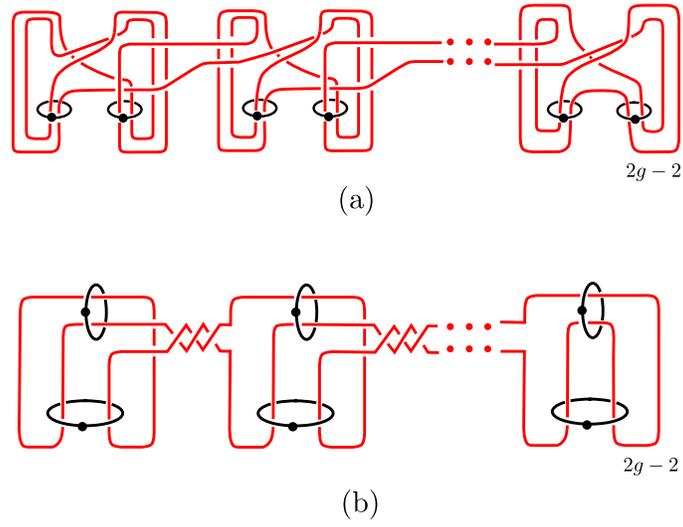}
                        \caption{Kirby diagram of $W_{g}$.}
                        \label{fig: Kirby_orientable_v2}
                    \end{center}
    \end{figure}

\begin{figure}[h]
                   \begin{center}
                        \includegraphics[width=225pt]{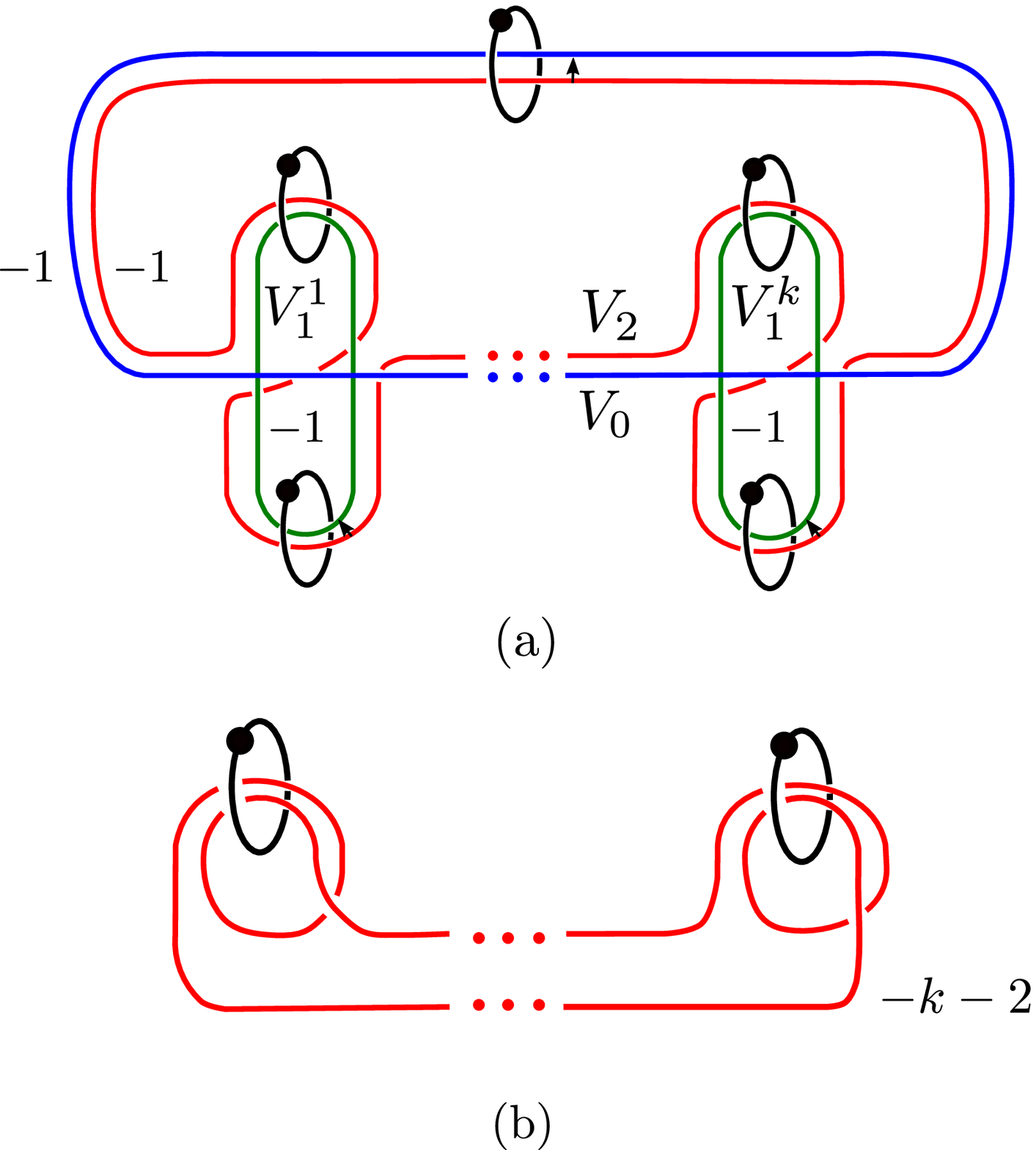}
                        \caption{Kirby diagram of $M_{k}$ and the Kirby calculus.}
                        \label{fig: Kirby_nonorientable}
                    \end{center}
    \end{figure}

    Based on this handle decomposition of $F_g$ and the collection of vanishing cycles $V_{0}, V_{1}^{1}$, $\dots$, $V_{1}^{2g}, V_{2}$,
    we draw the Kirby diagram of $W_{g}$ as depicted in Figure \ref{fig: Kirby_orientable} (a).
    Using $2$-handle slides and $1$-/$2$-handle cancelations as indicated in Figure \ref{fig: Kirby_orientable} (b),
    we obtain the Kirby diagram shown in Figure \ref{fig: Kirby_orientable} (c).  Next we switch to dotted circle notation for $1$-handles, and
    after isotopies, we see that the Kirby diagram in Figure \ref{fig:
Kirby_orientable_v2} (b) represents the disk bundle over $\Sigma_{g}$
    with Euler number $2g-2$, which is indeed diffeomorphic to
    $DT^{*}\Sigma_{g}$.

\subsection{Non-orientable case}

     We show that, for each $k \geq 1$,  the $4$-manifold $M_k$ is diffeomorphic to  $DT^{*}N_{k}$, again  using
  Kirby calculus.
    We start with the canonical handle decomposition of the fiber $F_{k}$ (see Figure \ref{fig: vanishing cycles of pi_{k}})
    and draw the Kirby diagram of $M_{k}$ as depicted in Figure \ref{fig: Kirby_nonorientable} (a).  After sliding $2$-handles and cancelling $1$-/$2$-handle pairs, we obtain the Kirby diagram shown in Figure \ref{fig: Kirby_nonorientable} (b).
    This diagram shows that $M_{k}$ is diffeomorphic to a disk bundle over $N_{k}$.
    The Euler number of this disk bundle is $k-2$ since the framing of the $2$-handle in the diagram is $-k-2$  (cf. \cite[Section
    4.6]{gs}). Therefore, we conclude that
    $M_{k}$ is diffeomorphic to $DT^{*}N_{k}$.

\noindent {\bf {Acknowledgement}}: We would like to thank Paul Seidel for helpful correspondence.
The first author would like to
express his gratitude to  Ko\c{c} University for its hospitality
during a visit while this work was mainly carried out.


\end{document}